\newcommand{\vecn}{\textbf{n}}
\newcommand{\real}[1]{\ensuremath{\text{Re}\left[ #1 \right]}}
\newcommand{\trace}[2]{#2}
\newcommand{\compemb}{\mathrel{\mathpalette\comp@emb\relax}}
\DeclarePairedDelimiter\dualpairing{\langle}{\rangle}
\theoremstyle{plain}
\newtheorem{theorem}{Theorem}[section]
\newtheorem{proposition}[thrm]{Proposition}
\newtheorem{remark}[thrm]{Remark}
\newcommand{\revision}[1]{\textcolor{blue}{#1}}
\newcommand{\MarginA}[1]{\checkoddpage 
                        \ifoddpage
                            {\marginpar{\textcolor{blue}{\mbox{\footnotesize Rev A #1}}}}
                            \else
                            {\reversemarginpar\marginpar{\textcolor{blue}{\mbox{\footnotesize Rev A #1}}}}
                        \fi}
\newcommand{\MarginB}[1]{\checkoddpage 
                        \ifoddpage
                            {\marginpar{\textcolor{blue}{\mbox{\footnotesize Rev B #1}}}}
                            \else
                            {\reversemarginpar\marginpar{\textcolor{blue}{\mbox{\footnotesize Rev B #1}}}}
                        \fi}
\numberwithin{equation}{section}
\renewcommand\MarginA[1]{}
\renewcommand\MarginB[1]{}
\renewcommand{\revision}[1]{#1}
\begin{document}
\title{Existence, uniqueness, and numerical solutions of the nonlinear periodic Westervelt equation}
\thanks{This work received funding from the ECSEL Joint Undertaking (JU) under grant agreement No. 101007350 and was partially supported by the Austrian Science Fund (FWF) [10.55776/P36318].}
\author{Benjamin Rainer}
\address{Department of Mathematics, University of Klagenfurt, Austria, Carinthia (\email{benjamin.rainer@aau.at})\\ 
Austrian Institute of Technology GmbH, Vienna, Austria (\email{benjamin.rainer@ait.ac.at})}

\author{Barbara Kaltenbacher}
\address{Department of Mathematics, University of Klagenfurt, Austria, Carinthia (\email{barbara.kaltenbacher@aau.at}.)}
\date{Received February 7th, 2025; revised June 6th, 2025; accepted June 23rd, 2025, Erratum 4th January 2026.}
\begin{abstract}
In this paper, we study the nonlinear periodic Westervelt equation with excitations located within a bounded domain in  $\mathbb{R}^d$, where $d \in \{2,3\}$, subject to Robin boundary conditions. This problem is of particular interest for advancing imaging techniques that exploit nonlinearity of the acoustic propagation. We establish the existence and uniqueness of solutions in both the linear and the nonlinear setting, thereby allowing for spatially varying coefficients as relevant in quantitative imaging. Derivation of a multiharmonic formulation enables us to show the generation of higher harmonics (that is, responses at multiples of the fundamental  frequency) due nonlinear wave propagation. An iterative scheme for solving the resulting  system is proposed that relies on successive resolution of these higher harmonics, and its convergence under smallness conditions on the excitation is proven. Furthermore, we investigate the numerical solution of the resulting system of Helmholtz equations, employing a conforming finite element method for its discretization. Through an implementation of the proposed methodology, we illustrate how acoustic waves propagate in nonlinear media. This study aims to enhance our understanding of ultrasound propagation dynamics, which is essential for obtaining high-quality images from limited in vivo and boundary measurements.
\end{abstract}

\subjclass{35A01, 35A02, 35G20, 35A35}
\keywords{nonlinear periodic Westervelt equation, well-posedness, numerical approximation}

\maketitle

\section{Introduction}
Ultrasound imaging plays a crucial role in medical diagnostics, relying on non-ionizing sound waves that pose no risk of cancer formation. In physics, ultrasound waves are characterized by a frequency above $20$ kHz and typically ranging from $2$ to $18$ MHz in medical ultrasound imaging. However, human tissue typically exhibits a nonlinear response to ultrasound already at clinically safely and commonly used pressure levels
~\cite{duck2002nonlinear}. 
This implies the necessity of an enhanced understanding of nonlinear wave propagation models, crucial in various applications such as ultrasound diagnostics for tissue discrimination. 
In this study, we investigate the existence and uniqueness of solutions to the nonlinear periodic Westervelt equation, 
governing the practically relevant scenario of periodic (e.g., sinusoidal) excitations. 

\MarginA{1.}
The nonlinear Westervelt equation reads
\begin{equation}
	\label{eq:westervelt}
	\frac{1}{c^2} p_{tt} - \Delta p - \frac{b}{c^2} \Delta p_{t} = 
	\revision{\frac{2+B/A}{2\rho_0 c^4}}
	(p^2)_{tt},
\end{equation}

\MarginA{2.}
where \revision{$B/A$ is the nonlinearity parameter\footnote{Note that $B/A$ is a single quantity -- originally derived as the quotient of two Taylor expansion coefficients -- and common notation in the nonlinear acoustics literature.}, 
	$c$ the sound speed, $b$ the diffusivity of sound and $\rho$ the mass density. All these coefficients can vary in space; their local values carry valuable diagnostic information and can be used for medical imaging, cf., e.g., \cite{Lucka2022} and the references therein.} Multiplying~\eqref{eq:westervelt} by $c^2 > 0$ and substituting 
\revision{$\eta=\frac{2+B/A}{2\rho_0 c^2}$},
equipping~\eqref{eq:westervelt} with Robin boundary as well as periodic in time (with period $T>0$) conditions 
and adding a source term $h$, we obtain the following boundary value problem for the Westervelt equation on a bounded domain $\Omega$ with $\text{C}^{1,1}$ boundary $\partial\Omega$
\begin{equation}
	\label{eq:westervelt:quadratic-nonlinearity:periodic}
	\begin{cases}
		p_{tt}(t,x) - \revision{c^2(x)} \Delta p(t,x) - \revision{b(x)}\Delta p_t(t,x) = \eta(x) (p(t,x)^2)_{tt} + h(t,x) & \text{in}\, (0,T) \times \Omega,\\
		\beta p_t(t,x) + \gamma p(t,x) + \nabla p(t,x) \cdot \vecn = 0 & \text{on}\,  (0,T) \times \partial\Omega , \\
		p(0,x) = p(T,x), \, p_t(0,x) = p_t(T,x) & x \in \Omega, \\
	\end{cases}   
\end{equation}
where $\beta,\gamma \geq  0$ are the parameters for specifying absorption or impedance conditions on $\partial\Omega$, and $\vecn$ denotes the outer normal on $\partial \Omega$. 

In our analysis we will assume $\eta\revision{,\,c,\, b} \in L^\infty(\Omega)$, noting that (possibly nonsmoothly) varying $B/A$\revision{, $c$, $b$} is highly relevant for imaging techniques that rely on the tissue specific values of \revision{these parameters}. 
Indeed, there exists a vast corpus of literature, e.g.,~\cite{nonlinparam1, duck2002nonlinear,  nonlinparam3, nonlinparam2, ZHANG20011359} that puts the value of the nonlinearity parameter in relation to different tissue types. Hence, the nonlinearity parameter $\eta(x)$ can be used as an imaging quantity and its reconstruction in inhomogeneous media from (incomplete) measurements is of high interest.

The SI units of the function $h(t,x)$ in \eqref{eq:westervelt:quadratic-nonlinearity:periodic} are 
Pa/s$^2$ and 
thus it corresponds to the
second time derivative of a periodic space and time dependent pressure source $g$. That is, with $g(t,x) = \text{Re}\{\hat{h}(x) e^{\iota \omega t}\}$,  $\omega = \frac{2 \pi}{T}$, we have $h(t,x) = g_{tt}(t,x)$. 
Due to the nonlinearity, solutions to~\eqref{eq:westervelt:quadratic-nonlinearity:periodic} will exhibit contributions at the fundamental frequency $\omega$ and multiples thereof -- so-called higher harmonics. 
This will in fact be a key element in our reformulation of \eqref{eq:westervelt:quadratic-nonlinearity:periodic}.

\MarginA{3.}
\revision{
	\begin{remark}
		In view of the fact that in actual ultrasound imaging, the excitation would be provided by a source located on the boundary, modeling the excitation by an interior rather than a boundary source is a model simplification that we make in order to render the analysis somewhat less technical and proving well-posedness with boundary instead of interior inhomogeneity is subject of future research.\\
		Still, the generation of interior sources by the nonlinear interaction through the quadratic term is one of the key intuitions behind imaging with nonlinear waves and can be explained by a Born approximation type argument as follows.
		\def\src{s}
		Considering \eqref{eq:westervelt} with nonhomogeneous boundary source $\src$ 
		\begin{equation*}
			\begin{cases}
				\frac{1}{c^2} p_{tt} - \Delta p - \frac{b}{c^2} \Delta p_{t} = 
				\frac{2+B/A}{2\rho_0 c^4}
				(p^2)_{tt} & \text{in}\, (0,T) \times \Omega,\\
				\beta p_t + \gamma p + \nabla p \cdot \vecn = \src & \text{on}\,  (0,T) \times \partial\Omega 
			\end{cases}
		\end{equation*}
		and decomposing the pressure into a part $p_\src$ satisfying the linear wave equation with these boundary conditions 
		\begin{equation*}
			\begin{cases}
				\frac{1}{c^2} p_{\src\,tt} - \Delta p_\src - \frac{b}{c^2} \Delta p_{\src\,t} = 0
				& \text{in}\, (0,T) \times \Omega,\\
				\beta p_{\src\,t} + \gamma p_\src + \nabla p_\src \cdot \vecn = \src & \text{on}\,  (0,T) \times \partial\Omega 
			\end{cases}
		\end{equation*}
		and the rest $\tilde{p}:=p-p_\src$ it becomes apparent that $\tilde{p}$ satisfies a problem with interior source $\tilde{g}:=\frac{2+B/A}{2\rho_0 c^4} (p_\src)^2$ and a modified sound speed defined by $\frac{1}{\tilde{c}^2}=\frac{1}{c^2}-\frac{2+B/A}{\rho_0 c^4}p_\src$ 
		\begin{equation*}
			\begin{cases}
				\left(\frac{1}{\tilde{c}^2} \tilde{p}\right)_{tt} - \Delta \tilde{p} - \frac{b}{c^2} \Delta \tilde{p}_{t} = 
				\frac{2+B/A}{2\rho_0 c^4}
				(\tilde{p}^2)_{tt} \, + \tilde{g}_{tt}& \text{in}\, (0,T) \times \Omega,\\
				\beta \tilde{p}_t + \gamma \tilde{p} + \nabla \tilde{p} \cdot \vecn = 0 & \text{on}\,  (0,T) \times \partial\Omega 
			\end{cases}
		\end{equation*}
	\end{remark}
}

Taking a closer look at~\eqref{eq:westervelt:quadratic-nonlinearity:periodic} and applying the identity $(p(t,x)^2)_{tt} = 2(p_t(t,x)^2 + p(t,x) p_{tt}(x,t))$ we obtain
\begin{equation}
	\label{degeneracy}
	(1- 2 \eta(x)p(t,x))p_{tt}(t,x) - \revision{c^2(x)} \Delta p(t,x) - \revision{b(x)}\Delta p_t(t,x) = 2\eta(x) p_t(t,x)^2 + h(t,x).
\end{equation}
This shows that the Westervelt equation degenerates if $1-2\eta(x)p(t,x) \leq 0$ for some $x \in \Omega$, which makes its analysis challenging. 
Several studies, such as ~\cite{KL09Westervelt,MizohataUkai,NIKOLIC20151131} have established local existence and uniqueness results for initial value problems for the Westervelt equation with different boundary conditions, as well as a priori estimates for finite element methods (FEM)~\cite{Nikolic2019}. Numerical methods, including time-stepping methods with fractional damping~\cite{baker2024numerical} and finite difference approaches~\cite{solovchuk2013simulation} offer simulation based insights into the nonlinear propagation of ultrasound. Additionally, analytical solutions for specific boundary problems, like those presented in~\cite{Jan2022} contribute to our understanding of the spatial distribution of harmonic components.

Since excitations are often sinusoidal in ultrasound applications, the development of tools for their analysis and numerical simulation in frequency domain is highly desirable to enhance efficiency and enable simulation times and precision guarantees that are compatible with the demands of imaging tasks.
The difficulty here lies in the fact that the quadratic nonlinearity gives rise to auto convolution terms in the Fourier transformed (with respect to time) equations and thus disables control of the potential degeneracy the way it is done in the time domain analysis, namely by bounding a pressure dependent coefficient (cf. $1-2\eta p$ in~\eqref{degeneracy}).

Following up on the preliminary analysis in~\cite{Kaltenbacher2021}, we therefore 
\begin{itemize}
	\item provide a well-posedness analysis that allows to deal with the potential degeneracy in frequency domain, cf. Theorem~\ref{thm:westervelt:nonlinear:solution:existence:uniqueness};
	\item develop an iterative multi-level (with respect to frequency levels) method for solving~\eqref{eq:westervelt:quadratic-nonlinearity:periodic} in frequency domain and prove its convergence, cf. Theorem~\ref{thm:iteration:scheme:convergence};
	\item devise a finite element scheme for its implementation and provide experiments demonstrating its efficiency.
\end{itemize}
As compared to \cite{Kaltenbacher2021}, we carry out our  analysis relying on a perturbation of the strongly damped wave equation with unit inertial coefficient (rather than a space-time varying one), which leads to a fixed point scheme that is much more efficient to implement. Moreover, we consider a setting incorporating spatially (possibly non-smoothly) varying sound speed, attenuation and nonlinearity coefficients, as highly relevant in imaging.

This paper is structured as follows: we introduce notation and relevant inequalities in~\cref{sec:notation}; lay the theoretical well-posedness groundwork in~\cref{sec:main_existence_uniqueness} by establishing existence and uniqueness of solutions; present a multiharmonic expansion of the nonlinear periodic Westervelt equation including an iteration scheme and its convergence analysis for numerical computations in~\cref{sec:iteration:scheme}; provide experimental results in~\cref{sec:experiments}; and draw some conclusions in~\cref{sec:conclusions}.

\section{Notation and auxiliary results}
\label{sec:notation}
We briefly set the notation and recall important inequalities used throughout this work. The (fractional) Sobolev spaces over an open and bounded $\text{C}^1$ domain $\Omega$ is denoted by $W^{s,p}(\Omega)$ with $1\leq p \leq \infty$, $s \in \mathbb{R}_{\geq 0}$ and by real interpolation we have $W^{s,p}(\Omega) = [W^{k,p}(\Omega),W^{k+1,p}(\Omega)]_{\theta,p}$, where $k = \lfloor s \rfloor$, and $\theta = s - \lfloor s \rfloor$. The Hilbert space $W^{s,2}(\Omega)$ is denoted by $H^{s}(\Omega)$. We denote the norm on these spaces by $||\cdot||_{W^{s,p}(\Omega)}$. On the involved Bochner spaces $L^p(0,T;L^q(\Omega))$ we sometimes abbreviate the norm by $||\cdot||_{L^p(L^q(\Omega))}$. Dependencies on time and space variables will be skipped for the sake of readability whenever they are clear from the context. 
For an open, bounded and connected $\text{C}^1$ domain $\Omega \subseteq \mathbb{R}^d$, $d \in \{2,3\}$ we make use of the following Sobolev embeddings and the resulting inequalities
\begin{align}
	\label{sobolev:space:1}
	||u||_{L^\infty(\Omega)} \leq C_{H^{13/8}(\Omega) \rightarrow L^\infty(\Omega)} ||u||_{H^{13/8}(\Omega)},
\end{align}
\begin{align}
	\label{sobolev:space:2}
	||u||_{L^4(\Omega)} \leq C_{H^{9/8}(\Omega) \rightarrow L^4(\Omega)} ||u||_{H^{9/8}(\Omega)},
\end{align}
%

\MarginB{iii.}
In the space-time domain we use the embedding inequalities 
\begin{align}
	\label{sobolev:space:time:1}
	||u||_{L^\infty(0,T;X)} \leq C_{H^{3/4}(0,T) \rightarrow L^\infty(0,T)} ||u||_{H^{3/4}(0,T; X)},
\end{align}
\begin{align}
	\label{sobolev:space:time:2}
	||u||_{L^4(0,T;X)} \leq C_{H^{1/4}(0,T) \rightarrow L^4(0,T)} ||u||_{H^{1/4}(0,T;X)}.
\end{align}
Moreover, we apply the trace theorem for $\Omega \subset \mathbb{R}^d$, open, bounded, connected, $d \in \{2,3\}$ with \revision{at least} Lipschitz boundary $\partial\Omega$, which yields 
\begin{align}
	\label{eq:trace:theorem:H1}
	||\trace{\Omega}{u}||_{H^{1/2}(\partial\Omega)} \leq C_{\text{trace}} ||u||_{H^1(\Omega)},
\end{align}
and the Poincaré-Friedrichs inequality which reads
\begin{equation}
	\label{eq:poincare:friedrich:inequality}
	||u||_{H^1(\Omega)}^2 \leq C_{\text{PF}}^2 \left(||\nabla u||_{L^2(\Omega)}^2 + ||\trace{\Omega}{u}||_{L^2(\partial \Omega)}^2\right).
\end{equation}
\section{Existence and uniqueness of periodic solutions}
\label{sec:main_existence_uniqueness}
In this section we investigate the existence and uniqueness of a solution to~\eqref{eq:westervelt:quadratic-nonlinearity:periodic}. 
We do so by first establishing well-posedness of a linear problem with variable coefficients and then extending this result towards the nonlinear setting for a sufficiently small excitation by means of a fixed point argument. Our results extend those from~\cite{Kaltenbacher2021} to spatially variable coefficients $c$, $b$, and $\eta$, as relevant for imaging.
Minimizing the smoothness assumptions on these coefficients to allow for jumps requires a dedicated weak and Galerkin formulation, cf. \eqref{eq:westervelt:linear:weak:form}, \eqref{eq:westervelt:linear:galerkin:form:full:ODE:system:a} in the proof of Theorem~\ref{thm:westervelt:linear:solution:existence:uniqueness}.
Note that a strictly positive impedance parameter $\gamma>0$ is needed to avoid constant solutions of the homogeneous problem, that is, to obtain uniqueness.
\begin{theorem}
	\label{thm:westervelt:linear:solution:existence:uniqueness}
	[Linear well-posedness]\\
	Let $T>0$, $\Omega \subseteq \mathbb{R}^d$, $d \in \{2,3\}$, open, bounded, connected, with $C^{1,1}$ boundary, $\beta,\,\gamma>0$, $c, \, b,\,\frac{1}{c},\frac{1}{b}\in L^\infty(\Omega)$, $c,\, b>0$ 
	$\alpha, \, \frac{1}{\alpha}\, \in L^\infty (0,T;L^\infty(\Omega)), \, \alpha_t\, \in L^\infty (0,T;L^\infty(\Omega))$, $\alpha>0$, $\alpha(0) = \alpha(T)$, $\mu \in C([0,T];L^{2q/(q-1)}(\Omega))$, $\mu(0) = \mu(T)$
	for some $q\in[1,\infty)$ and 
	$f \in L^2(0,T;L^2(\Omega))$
	
	Then there exists $r_0>0$ such that for coefficients satisfying the smallness condition
	\begin{equation}\label{smallness}
		\|\alpha_t\|_{L^\infty(0,T;L^\infty(\Omega))}
		+\|\tfrac{\gamma}{\beta}-\tfrac{\mu}{\alpha}\|_{L^\infty(0,T;L^{2q/(q-1)}(\Omega))}
		<r_0
	\end{equation} 
	there exists a unique (weak) solution $u$ of
	\begin{equation}
		\label{eq:westervelt:linear:periodic}
		\begin{cases}
			\alpha u_{tt} - c^2 \Delta u - b \Delta u_t + \mu u_t= f & \text{in } \Omega\times(0,T),\\
			\beta u_t + \gamma u + \nabla u \cdot \vecn = 0 & \text{on } \partial\Omega\times(0,T), \\
			u(0) = u(T), \, u_t(0) = u_t(T) & \text{in } \Omega, \\
		\end{cases}   
	\end{equation}
	with 
	\begin{align}
		\nonumber
		u \in X =\{v \in H^2(0,T;L^2(\Omega)) \cap H^1(0,T;H^{3/2}(\Omega)) \cap L^2(0,T;H^2(\Omega)):  \\ \nonumber
		|| \trace{\Omega}{\nabla v\cdot\mathbf{n}} ||_{H^1(0,T;L^2(\partial\Omega))} < \infty, v(0) = v(T), v_t(0) = v_t(T) \text{ a.e.}\}
	\end{align}
	\MarginA{4.}
	and the solution $u$ satisfies
	\begin{align}
		\nonumber
		||u||_X^2 \leq C^2 ||f||_{ L^2(0,T;L^2(\Omega))}^2,
	\end{align}
	where
	\begin{align}
		\nonumber
		||u||_X^2  = ||u||_{H^2(0,T;L^2(\Omega))}^2 + ||u||_{H^1(0,T;H^{3/2}(\Omega))}^2 + ||u||_{L^2(0,T;H^2(\Omega))}^2 + || \trace{\Omega}{\nabla v\cdot\mathbf{n}} ||_{H^1(0,T;L^2(\partial\Omega))}^2,
	\end{align}
	and $C=C(\alpha, b,\gamma,c,\beta, T, \Omega) > 0$.
\end{theorem}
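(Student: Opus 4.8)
The plan is to solve \eqref{eq:westervelt:linear:periodic} by a Galerkin-in-space scheme, exploiting $T$-periodicity to kill endpoint-in-time contributions and the strong damping $-b\Delta u_t$ to recover spatial $H^2$-regularity despite the merely $L^\infty$ coefficients. First I would recast the problem in a weak form tailored to nonsmooth $c,b$: rather than integrating $c^2\Delta u$ by parts (which would produce the uncontrolled factor $\nabla c^2$), I keep $\Delta u$ and $\Delta u_t$ as genuine $L^2(\Omega)$ functions and only invoke the Robin condition to express the boundary flux $\nabla u\cdot\vecn=-\beta u_t-\gamma u$. The central structural observation is that $c^2$ and $b$ multiply the \emph{same} operator, so after a time-Fourier expansion $u=\sum_n u_n(x)\,e^{\iota n\omega t}$ the leading part of each mode reads $-(c^2+\iota n\omega b)\Delta u_n$; since $(c^2+\iota n\omega b)^{-1}\in L^\infty(\Omega)$, pairing a mode with $-\overline{\Delta u_n}$ yields the coercive quantity $\int_\Omega c^2|\Delta u_n|^2$ with \emph{no} derivatives of the coefficients. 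This is exactly the mechanism that delivers the $L^2(0,T;H^2)$ bound, via $C^{1,1}$ elliptic regularity for the constant-coefficient Laplacian.

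Next I would fix a Galerkin basis $\{\phi_k\}$ dense in the energy space and seek $u_N(t)=\sum_{k\le N}c_k(t)\phi_k$, reducing the equation to a linear second-order ODE system $M(t)\ddot c+D\dot c+Kc=F(t)$ (with $M(t)$ carrying the time dependence of $\alpha$) under the periodic constraints $c(0)=c(T)$, $\dot c(0)=\dot c(T)$. Solvability of this finite-dimensional periodic problem follows from a Fredholm-type alternative once I show that the only $T$-periodic solution of the homogeneous system is trivial: for the nonzero Fourier modes this is a consequence of the dissipative coercive structure just described, while for the zeroth (time-constant) mode it is precisely the positivity $\gamma>0$ that excludes the constant solutions flagged before the theorem.

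The heart of the argument is then a family of a priori estimates uniform in $N$, obtained by testing the Galerkin equations successively with $u_{N,t}$, with $-\Delta u_{N,t}$, and with $u_{N,tt}$ (or their modal analogues), always integrating over the full period so that every exact $\partial_t$-term vanishes by periodicity. The surviving coercive quantities are the strong-damping term $\int_0^T\!\!\int_\Omega b\,|\Delta u_{N,t}|^2$ and the boundary dissipation $\int_0^T\!\!\int_{\partial\Omega}\beta\,|u_{N,t}|^2$, which control the top-order norms of $X$; the indefinite remainders generated by $\alpha_t\neq0$ and by the boundary-coefficient mismatch $\tfrac{\gamma}{\beta}-\tfrac{\mu}{\alpha}$ are handled with the embeddings \eqref{sobolev:space:1}--\eqref{sobolev:space:3}, the trace inequality \eqref{eq:trace:theorem:H1} and Poincaré--Friedrichs \eqref{eq:poincare:friedrich:inequality}, and then absorbed into the coercive terms exactly under the smallness condition \eqref{smallness} (this fixes $r_0$). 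I expect this closing of the estimates with only $L^\infty$ coefficients to be the main obstacle, since one must simultaneously exploit periodicity, extract coercivity purely from the strong damping and the impedance terms, and keep the $\alpha_t$- and mismatch-induced terms subcritical. With the $N$-uniform bound in hand I would pass to the limit by weak/weak-$*$ compactness, identify the limit as a weak solution by linearity, and obtain the stated estimate by weak lower semicontinuity; uniqueness follows from the same estimate applied to the difference of two solutions with $f=0$. Finally, the $H^1(0,T;H^{3/2}(\Omega))$ component is recovered by interpolating the $L^2(0,T;H^2)$ and $H^2(0,T;L^2)$ bounds, and the trace term $\|\nabla u\cdot\vecn\|_{H^1(0,T;L^2(\partial\Omega))}$ from the Robin identity $\nabla u\cdot\vecn=-\beta u_t-\gamma u$ together with the time-regularity of $u$.
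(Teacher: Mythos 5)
Your overall skeleton---spatial Galerkin, solvability of the periodic ODE system via triviality of the homogeneous periodic solutions, period-integrated energy estimates whose coercivity comes from the strong damping and the boundary dissipation, smallness of $\alpha_t$ and of the mismatch $\tfrac{\gamma}{\beta}-\tfrac{\mu}{\alpha}$ to absorb the indefinite terms, weak limits, and elliptic regularity for the $L^2(0,T;H^2(\Omega))$ bound---is exactly the structure of the paper's proof. However, there is a genuine gap in your last step: the claim that the $H^1(0,T;H^{3/2}(\Omega))$ component is ``recovered by interpolating the $L^2(0,T;H^2)$ and $H^2(0,T;L^2)$ bounds'' is false. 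Interpolating these spaces gives $H^{2(1-\theta)}(0,T;H^{2\theta}(\Omega))$; requiring $H^1$ in time forces $\theta=\tfrac12$ and hence only $H^1(\Omega)$ in space, which falls short of $H^{3/2}(\Omega)$. Since $u\in H^1(0,T;H^{3/2}(\Omega))$ is part of the definition of $X$, the proposal as written does not yield the stated conclusion. The correct route is available from quantities you already control: the strong damping term bounds $\|\Delta u_{N\,t}\|_{L^2(0,T;L^2(\Omega))}$ and the boundary terms bound the flux $\|\nabla u_{N\,t}\cdot\vecn\|_{L^2(0,T;L^2(\partial\Omega))}$; elliptic regularity with $L^2(\partial\Omega)$ Neumann-type data then gives $u_t\in L^2(0,T;H^{3/2}(\Omega))$ directly, which is how the paper proceeds.

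Two further points are glossed over and deserve mention. First, the time-Fourier decoupling you invoke is not actually available: $\alpha(t,x)$ and $\mu(t,x)$ depend on time, so the Fourier modes of $u$ couple through $\alpha u_{tt}$ and $\mu u_t$; both the coercivity and the ``only the zero periodic solution'' argument must be run directly in the time domain via the period-integrated energy identity (this works, and is what the paper does before applying the Floquet--Lyapunov theorem to the Galerkin ODE system). Second, and more substantively, the boundary condition: with merely $L^\infty$ coefficients, the flux that enters any usable weak form is the \emph{time-differentiated} Robin condition $\nabla u_t\cdot\vecn=-\beta u_{tt}-\gamma u_t$ (it is this version that pairs with the $u_{tt}$ term after integration by parts), not the undifferentiated identity $\nabla u\cdot\vecn=-\beta u_t-\gamma u$ that you propose to insert. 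Recovering the undifferentiated condition from its time derivative requires a zero-mean compatibility condition over the period, $\int_0^T(\gamma u+\nabla u\cdot\vecn)\,dt=0$; the paper enforces this by introducing an auxiliary periodic boundary unknown into the variational formulation and the Galerkin system. Your proposal assumes exactly what this device is needed to justify, and relatedly, your listed boundary dissipation $\int_0^T\int_{\partial\Omega}\beta|u_{N\,t}|^2$ does not suffice for the trace norm $\|\nabla u\cdot\vecn\|_{H^1(0,T;L^2(\partial\Omega))}$, which needs $L^2$ control of the trace of $u_{tt}$, equivalently of the flux of $u_t$.
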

\MarginB{iv.}
\begin{proof}
	See the~\cref{appendix:proof:thm:3:1}.
	\revision{As can be seen from this proof, dependency of $C(\alpha, b,\gamma,c,\beta, T, \Omega)$ on the coefficients $\alpha,\,b,\,c$ is actually only via the $L^\infty$ norms of $\alpha, \, \frac{1}{\alpha}\, , \alpha_t$, $c, \, b,\,\frac{1}{c},\frac{1}{b}\in L^\infty(\Omega)$.}
\end{proof}
\MarginA{5.}
%
The solution space established in Theorem~\ref{thm:westervelt:linear:solution:existence:uniqueness}, given the assumed regularity of the coefficients and the Sobolev embedding theorem, implies that a solution to problem~\eqref{eq:westervelt:linear:periodic} is continuous in time, i.e., an element of $C(0,T;H^{3/2}(\Omega)) \cap C^1(0,T;L^2(\Omega))$. While periodicity of the right-hand side $f$ is not required, it is essential to use periodic coefficients $\alpha$ and $\mu$.

\begin{theorem}
	\label{thm:westervelt:nonlinear:solution:existence:uniqueness}[Nonlinear well-posedness]\\
	Let $T>0$, $\Omega \subseteq \mathbb{R}^d$, $d \in \{2,3\}$, open, bounded, connected, with $C^{1,1}$ boundary, $\beta,\,\gamma>0$, $c, \, b,\,\frac{1}{c},\frac{1}{b}\in L^\infty(\Omega)$, $c,\, b>0$ and $\eta \in L^\infty(\Omega)$. 
	Then there exists $\delta > 0$ such that for all $h\in L^2(0,T;L^2(\Omega))$ with $||h||_{L^2(0,T;L^2(\Omega))} \leq \delta$ there exists a unique (weak) solution $u \in X$ of
	\begin{equation}
		\label{eq:westervelt:nonlinear:periodic}
		\begin{cases}
			u_{tt} - c^2 \Delta u - b \Delta u_t = \eta (u^2)_{tt} + h & \text{in }  (0,T) \times \Omega,\\
			\beta u_t + \gamma u + \nabla u \cdot \vecn = 0 & \text{on } (0,T) \times \partial\Omega, \\
			u(0) = u(T), \, u_t(0) = u_t(T) & \text{in } \Omega, \\
		\end{cases}   
	\end{equation}
	and the solution $u$ satisfies
	\begin{align}
		\nonumber
		||u||_X \leq C ||h||_{ L^2(0,T;L^2(\Omega))},
	\end{align}
	for some constant $C=\revision{C(\delta, b,\gamma,c,\beta, T, \Omega)} > 0$ independent of $h$ \revision{and u}. 
\end{theorem}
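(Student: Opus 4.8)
The plan is to recast \eqref{eq:westervelt:nonlinear:periodic} as a fixed-point equation solved by Banach's theorem, using Theorem~\ref{thm:westervelt:linear:solution:existence:uniqueness} as the inner linear solver and perturbing the strongly damped wave equation with \emph{unit} inertial coefficient. With the identity $(u^2)_{tt}=2(u_t^2+u\,u_{tt})$ I keep the entire nonlinearity on the right-hand side: for $v\in X$ I let $u=\mathcal T(v)$ be the solution supplied by Theorem~\ref{thm:westervelt:linear:solution:existence:uniqueness} of \eqref{eq:westervelt:linear:periodic} with the time-independent coefficients $\alpha\equiv1$, $\mu\equiv0$ and source $f(v):=\eta\,(v^2)_{tt}+h=2\eta\,(v_t^2+v\,v_{tt})+h$. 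Since $\alpha_t=0$ and $\mu=0$, the requirement \eqref{smallness} reduces to a condition on the fixed data $\beta,\gamma,\Omega$, and, crucially, the constant $C$ from Theorem~\ref{thm:westervelt:linear:solution:existence:uniqueness} is then frozen \emph{independently of} $v$; this is exactly what freezing the inertial coefficient at $1$, rather than at the possibly degenerate $1-2\eta v$, buys. Every fixed point $u=\mathcal T(u)$ is a weak solution of \eqref{eq:westervelt:nonlinear:periodic}.

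The first substantial step is that $\mathcal T$ maps $X$ into itself, i.e.\ $f(v)\in L^2(0,T;L^2(\Omega))$ with a quadratic bound. The two genuinely nonlinear contributions are handled by two anisotropic embeddings matched to the fractional orders recorded in \cref{sec:notation}. Interpolating $H^1(0,T;H^{3/2}(\Omega))\cap L^2(0,T;H^2(\Omega))$ gives $v\in H^{3/4}(0,T;H^{13/8}(\Omega))$, and combining \eqref{sobolev:space:1} with the space-time embedding $H^{3/4}(0,T;H^{13/8}(\Omega))\hookrightarrow L^\infty(0,T;H^{13/8}(\Omega))$ yields $X\hookrightarrow L^\infty(0,T;L^\infty(\Omega))$; consequently $\|\eta\,v\,v_{tt}\|_{L^2(0,T;L^2(\Omega))}\le \|\eta\|_{L^\infty(\Omega)}\,\|v\|_{L^\infty(0,T;L^\infty(\Omega))}\,\|v_{tt}\|_{L^2(0,T;L^2(\Omega))}\le C_\eta\|v\|_X^2$. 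Interpolating $H^1(0,T;L^2(\Omega))\cap L^2(0,T;H^{3/2}(\Omega))$ gives $v_t\in H^{1/4}(0,T;H^{9/8}(\Omega))$, hence $v_t\in L^4(0,T;L^4(\Omega))$ by \eqref{sobolev:space:2} and the corresponding space-time embedding, so that $\|\eta\,v_t^2\|_{L^2(0,T;L^2(\Omega))}\le\|\eta\|_{L^\infty(\Omega)}\|v_t\|_{L^4(0,T;L^4(\Omega))}^2\le C_\eta\|v\|_X^2$. Adding $h$ and applying Theorem~\ref{thm:westervelt:linear:solution:existence:uniqueness}, $\|\mathcal T(v)\|_X\le C\big(C_\eta\|v\|_X^2+\|h\|_{L^2(0,T;L^2(\Omega))}\big)$.

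I would then run the contraction on a closed ball $\overline B_R\subset X$. Self-mapping $\mathcal T(\overline B_R)\subseteq\overline B_R$ follows from the bound above by first choosing $R$ with $C\,C_\eta R\le\tfrac12$ and then $\delta$ with $C\,\delta\le\tfrac R2$, so that $\|h\|_{L^2(0,T;L^2(\Omega))}\le\delta$ forces $\|\mathcal T(v)\|_X\le R$. For contraction, $w=\mathcal T(v_1)-\mathcal T(v_2)$ solves \eqref{eq:westervelt:linear:periodic} with source $\eta\big((v_1+v_2)(v_1-v_2)\big)_{tt}$; expanding the time derivative and estimating each product exactly as above (one factor in $L^\infty(0,T;L^\infty(\Omega))$ or $L^4(0,T;L^4(\Omega))$, the other carrying two time derivatives or a second $L^4$-factor) yields $\|w\|_X\le 2C\,C_\eta R\,\|v_1-v_2\|_X$, a strict contraction once $R\le 1/(4C C_\eta)$. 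Banach's theorem gives a unique fixed point $u\in\overline B_R$. Finally, inserting $\|u\|_X\le R$ into the estimate of the previous paragraph and absorbing the quadratic term (again $C C_\eta R\le\tfrac12$) yields $\|u\|_X\le 2C\,\|h\|_{L^2(0,T;L^2(\Omega))}$, the asserted linear bound; uniqueness is understood in $\overline B_R$, which contains every solution meeting this bound.

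The technical heart, and the main obstacle, is the second step: placing the quadratic terms $v\,v_{tt}$ and $v_t^2$ in $L^2(0,T;L^2(\Omega))$ with a bound by $\|v\|_X^2$. This is what forces the space-time interpolation inclusions into $H^{3/4}(0,T;H^{13/8}(\Omega))$ and $H^{1/4}(0,T;H^{9/8}(\Omega))$ and explains why precisely the orders $13/8,\,9/8$ and $3/4,\,1/4$ are isolated in \cref{sec:notation}; verifying these inclusions and the uniformity of their constants over $\overline B_R$ is the delicate part. By contrast, the degeneracy $1-2\eta u\le0$ stressed in the introduction never appears: with the unit inertial coefficient it is demoted to a forcing term and is controlled solely through the smallness of $\|h\|_{L^2(0,T;L^2(\Omega))}$.
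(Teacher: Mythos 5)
Your proposal is correct and follows essentially the same route as the paper: a Banach fixed-point argument for the solution operator of the linear problem frozen at $\alpha\equiv 1$, $\mu\equiv 0$, with the quadratic terms $v\,v_{tt}$ and $v_t^2$ controlled through the same interpolation embeddings into $H^{3/4}(0,T;H^{13/8}(\Omega))$ and $H^{1/4}(0,T;H^{9/8}(\Omega))$, the same splitting of $(v_1^2)_{tt}-(v_2^2)_{tt}$ for the contraction estimate, and the same smallness choices of $r$ (your $R$) and $\delta$. The only substantive difference is that the paper additionally imposes its condition \eqref{eq:westervelt:nonlinear:condition:4} to verify that the fixed point satisfies $1-2\eta u>0$ a.e., i.e., non-degeneracy of the equivalent form \eqref{degeneracy}; since this is not required for the theorem as stated, your omission of it is not a gap.
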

\begin{proof}
	See the~\cref{appendix:proof:thm:3:2}.
\end{proof}
\MarginA{5.}
%
%
Theorem~\ref{thm:westervelt:nonlinear:solution:existence:uniqueness} establishes the theoretical foundation for studying the periodic Westervelt problem. However, its proof -- though being constructive in principle, by applying a fixed point argument -- does not provide an efficient numerical method for obtaining the solution. The continuity and periodicity of the solution naturally leads to a multiharmonic ansatz, which we leverage in the next section to develop a multilevel scheme for the numerical computation of solutions.
\section{Iteration scheme exploiting a multiharmonic expansion}
\label{sec:iteration:scheme}
Due to the periodicity condition in~\eqref{eq:westervelt:quadratic-nonlinearity:periodic}, the solution can be represented in terms of the $L^2(0,T)$ orthogonal basis $(e^{\iota m \omega t})_{m\in\mathbb{Z}}$ as $p(t,x) = \text{Re}[\sum_{m = 0}^{\infty} \hat{p}_{\revision{m}}(x) e^{\iota m \omega t}]$, cf.~\cite{Kaltenbacher2021}.
Motivated by this, we use a multiharmonic ansatz 
\begin{equation}\label{pNansatz}
	p^N(t,x) = \text{Re}[\sum_{m = 0}^{N} \hat{p}_{\revision{m}}(x) e^{\iota m \omega t}], 
	\text{ where } 
	\hat{p}_{\revision{m}} \in L^2(\Omega; \mathbb{C})
\end{equation} 
\MarginA{6.}
and  project~\eqref{eq:westervelt:quadratic-nonlinearity:periodic} onto the 
subspace 
\begin{equation}
	\nonumber
	X_N:= \left\{\text{Re}  \left[  \sum_{m=0}^N \alpha_m(x) e^{\iota m \omega t} \right]: \alpha_m \in H^2(\Omega; \mathbb{C}) \right\}
\end{equation} 
to obtain an approximation of the actual solution. 
We will further investigate under which assumptions this projection converges to the unique solution of~\eqref{eq:westervelt:quadratic-nonlinearity:periodic}. 
The identity $\text{Re}[z] = \frac{1}{2}(z + \overline{z})$ applied to our ansatz reads
\begin{equation}
	\nonumber
	\text{Re}\left[\sum_{m=0}^N \hat{p}_{\revision{m}}(x) e^{\iota m \omega t} \right] = \frac{1}{2} \left(\sum_{m=0}^N \hat{p}_{\revision{m}}(x) e^{\iota m \omega t} + \sum_{m=0}^N \overline{\hat{p}_{\revision{m}}(x)} e^{- \iota m \omega t} \right),
\end{equation}
\MarginA{7.}
and, together with $L^2$ orthogonality of trigonometric functions (conveniently expressed via complex exponential functions) is one of the tools for deriving a multiharmonic system by plugging \eqref{pNansatz} into \eqref{eq:westervelt:quadratic-nonlinearity:periodic}.

\begin{proposition}
	\label{prop:helmholtz:fully:coupled}
	Projecting~\eqref{eq:westervelt:quadratic-nonlinearity:periodic} onto $X_N$ with periodic excitation $h(t,x)=g_{tt}$, $g(t,x) = \text{Re}[\sum_{k=1}^N \hat{h}_k(x) e^{\iota k \omega t}]$, $\omega = \frac{2 \pi}{T}$ yields the following coupled system of Helmholtz equations for $x \in \Omega$:
	\begin{equation}
		\label{eq:helmholtz:systems}
		\begin{cases}
			-c^2\Delta\hat{u}_0(x) = 0 & m=0, \\ 
			-\kappa^2 m^2 \hat{u}_m(x) - \Delta \hat{u}_m(x) = \\ 
			\qquad \qquad - \frac{m^2 \kappa^2 \eta(x)}{2} \left(\sum_{j=1}^m \hat{u}_j(x) \hat{u}_{m-j}(x) + 2 \sum_{j=m:2}^{2N-m}\overline{\hat{u}_{\frac{j - m}{2}}(x)}\hat{u}_{\frac{j + m}{2}}(x) \right) - \kappa^2 m^2 \hat{h}_m(x)& 
			m\in\mathbb{N}
		\end{cases}   
	\end{equation}
	where $\kappa^2=\frac{\omega^2}{c^2 + \iota m \omega b}$. 
	\revision{The notation $:2$ in the sum indicates that the index takes steps of size two, that is, $j\in\{m,m+2,m+4,\ldots, m+2(N-m)\}$.} 
	
	This system is further equipped with the projected boundary conditions $\left(\iota \beta m \omega + \gamma \right) \hat{u}_m + \nabla \hat{u}_m \cdot \vecn = 0$ on $\partial \Omega$.
\end{proposition}
\MarginB{v.}
\begin{proof}
	See the~\cref{appendix:proof:prop:4:1}.
\end{proof}

\MarginA{8.}
The system in~\eqref{eq:helmholtz:systems} is fully coupled, and the right-hand side scales with $m^2$ exhibiting an autoconvolution that blends together higher and lower harmonics of the solution. This behavior arises from the nonlinear nature of the Westervelt equation, as demonstrated in the proof of Proposition~\ref{prop:helmholtz:fully:coupled}. 
\revision{Moreover, the boundary condition on each harmonic of the solution further confirms the validity and necessity of assuming $\gamma > 0$ to ensure a unique solution. 
	(i.e., the case $m=0$ is equipped with the boundary condition $ \gamma \hat{u}_m + \nabla \hat{u}_m \cdot \vecn = 0$ on $\partial \Omega$).}
The found system~\eqref{eq:helmholtz:systems} is too large to be directly discretized (e.g., by the finite element methods), since it is fully coupled. From density of the union of the spaces $X_N$ in $X$ we conclude that for a solution $u$ to~\eqref{eq:westervelt} (in fact, for any $u\in X$), 
\begin{equation}
	|| u - \text{Proj}_{X_N}u||_X \rightarrow 0, \text{ as } N \rightarrow \infty.
\end{equation}
To numerically approximate the solution, we adopt the fixed-point approach from Theorem~\ref{thm:westervelt:nonlinear:solution:existence:uniqueness} to construct an iterative approximation scheme, but do so in a multilevel fashion, sucessively increasing the frequency level, by defining $\Tilde{u}_N \in X_N$ as the solution to
\begin{align}
	\label{eq:projection:iteration:scheme}
	\text{Proj}_{X_N}\left((\Tilde{u}_N)_{tt} - c^2 \Delta (\Tilde{u}_N) - b \Delta (\Tilde{u}_N)_t - \eta \left(\Tilde{u}_{N-1}^2\right)_{tt} - h \right) = 0.
\end{align}
Note that the nonlinear term is only considered on the previous frequency level and therefore only a system of linear Helmholtz equations need to be solved in each step, which due to independence of the coefficients on the states $\Tilde{u}_{N-1}$ can be done in a highly efficient parallelized manner. Convergence of this scheme is stated the following theorem.
\begin{theorem}
	\label{thm:iteration:scheme:convergence}
	[Iterative approximation scheme]\\    
	Under the assumptions of Theorem~\ref{thm:westervelt:nonlinear:solution:existence:uniqueness} there exists $\delta>0$ such that for all $h \in L^2(0,T,L^2(\Omega))$ with $||h||_{L^2(0,T;L^2(\Omega))} < \delta$, 
	the sequence of solutions 
	$\Tilde{u}_N \in X_N$ of the iteration scheme \eqref{eq:projection:iteration:scheme}
	converges to the unique solution $u$ of~\eqref{eq:westervelt:nonlinear:periodic}.
\end{theorem}
\begin{proof}
	Consider the operator $L_{\alpha,\mu}: X \rightarrow X^{\prime}$ and $\Tilde{u}_N \in X_N$ as a Galerkin solution of
	\begin{align}
		\label{eq:projection:iteration:scheme:proof}
		&\dualpairing{L_{\alpha,\mu} \Tilde{u}_N - f,\phi}_{X^\prime,X} := \int_{0}^T \left(\alpha (\Tilde{u}_N)_{tt} + \mu (\Tilde{u}_N)_t, \phi \right)_{L^2(\Omega)} + \left(c^2 \nabla \Tilde{u}_N + b \nabla (\Tilde{u}_N)_t,    \nabla \phi \right)_{L^2(\Omega)}\\ \nonumber
		& + \left((c^2 \beta + b\gamma) (\Tilde{u}_N)_t + c^2 \gamma \Tilde{u}_N + b \beta (\Tilde{u}_N)_{tt}, \phi \right)_{L^2(\partial\Omega)} - (f,\phi)_{L^2(\Omega)} dt = 0,
	\end{align}
	for all $\phi \in X_N$ where we set $\alpha = 1$, $\mu = 0$, and $f = \eta \left(\Tilde{u}_{N-1}^2\right)_{tt} + h$. 
	One readily checks that the solution to~\eqref{eq:projection:iteration:scheme:proof} exists and is unique using the respective test functions $\Tilde{u}_N$, $(\Tilde{u}_N)_t$, $(\Tilde{u}_N)_{tt}$, $-\Delta\Tilde{u}_N$, $-\Delta(\Tilde{u}_N)_t$ which are indeed contained in $X_N$ for $\alpha$ and $f$ fulfilling the assumptions of Theorem~\ref{thm:westervelt:linear:solution:existence:uniqueness}. 
	The coefficient functions $\hat{u}_{k}^{N}$ in the representation $\Tilde{u}_{N} = \sum_{k=1}^N \hat{u}_{k}^{N} e^{\iota k \omega t}$ satisfy the coupled Helmholtz system
	\begin{equation}
		\label{eq:helmholtz:systems:iteration:scheme}
		\begin{cases}
			-c^2\Delta\hat{u}_0^{N}(x) = 0 \\
			-\kappa^2 m^2 \hat{u}_m^{N}(x) - \Delta \hat{u}_m^{N}(x) = \\ 
			\quad - \frac{\kappa^2 m^2 \eta(x)}{2} \left(\sum_{j=1}^m \hat{u}_j^{N-1}(x) \hat{u}_{m-j}^{N-1}(x) + 2 \sum_{j=m:2}^{2N-m-2}\overline{\hat{u}_{\frac{j - m}{2}}^{N-1}(x)}\hat{u}_{\frac{j + m}{2}}^{N-1}(x) \right) - \kappa^2 m^2 \hat{h}_m(x)  
			\qquad m \in\mathbb{N},
		\end{cases}   
	\end{equation}
	with the boundary conditions $\iota \beta m \omega \hat{u}_m^{N} + \gamma \hat{u}_m^{N} + \nabla \hat{u}_m^{N} \cdot \vecn =0$ on $\partial \Omega$.
	We obtain $||\Tilde{u}_{N}||_X < r$ provided $||\Tilde{u}_{N-1}||_X < r$, by~\eqref{eq:westervelt:nonlinear:condition:3:modified} for $||h||_{L^2(0,T;L^2(\Omega))} < \delta$ and $r > 0$ chosen small enough according to 
	\revision{ the arguments in the proof of }
	Theorem~\ref{thm:westervelt:nonlinear:solution:existence:uniqueness}. 
	\MarginB{vi.}
	A direct investigation of the error $w:= \Tilde{u}_{N} - u$ is not possible because the difference does not need to be contained in $X_N$. Therefore, we circumvent this issue by investigating $\hat{w}:= \Tilde{u}_{N} - \text{Proj}_{X_N}u \in X_N$. For $\phi \in X_N$ we have
	\begin{align}
		\nonumber
		\dualpairing{L_{\alpha,\mu} \hat{w},\phi}_{X^\prime,X} & =  \dualpairing{L_{\alpha,\mu} (\Tilde{u}_{N} - u + u - \text{Proj}_{X_N}u ),\phi}_{X^\prime,X} \\ \nonumber
		& = \dualpairing{L_{\alpha,\mu} (\Tilde{u}_{N} - u) +  L_{\alpha,\mu} ( u - \text{Proj}_{X_N}u),\phi}_{X^\prime,X} \\ \nonumber
		& =  \left( \eta (\Tilde{u}_{N-1}^2)_{tt}  - \eta(u^2)_{tt}, \phi \right)_{L^2(0,T;L^2(\Omega))} + \dualpairing{L_{\alpha,\mu} (u - \text{Proj}_{X_N}u) ,\phi}_{X^\prime,X} \\ \nonumber
		& =  \left(2\eta [ ( (\Tilde{u}_{N-1})^2_{t} - u_t ^2) + \Tilde{u}_{N-1} ( (\Tilde{u}_{N-1})_{tt} - u_{tt}) \right.  \\ \nonumber
		& \left. + u_{tt}(\Tilde{u}_{N-1} - u) ], \phi \right)_{L^2(0,T;L^2(\Omega))} + \dualpairing{L_{\alpha,\mu} (u - \text{Proj}_{X_N}u) ,\phi}_{X^\prime,X},
	\end{align}
	where we used the fact that $(L_{\alpha,\mu} u, \phi)_{L^2(0,T;L^2(\Omega))} = (\eta (u^2)_{tt} + h, \phi)_{L^2(0,T;L^2(\Omega))}$, $\forall \phi \in X$ and $X_N \subset X$.
	%
	Theorem~\ref{thm:westervelt:linear:solution:existence:uniqueness} yields a constant $C>0$ such that
	\begin{align}
		\nonumber
		||\Tilde{u}_{N} - \text{Proj}_{X_N}u||_X \leq C || & 2\eta [ (  (\Tilde{u}_{N-1})^2_{t} - u_t ^2) + \Tilde{u}_{N-1} ( (\Tilde{u}_{N-1})_{tt} - u_{tt}) \\ \nonumber
		& + u_{tt}(\Tilde{u}_{N-1} - u) ] + L_{\alpha,\mu}(u - \text{Proj}_{X_N}u)||_{L^2(0,T;L^2(\Omega))}.
	\end{align}
	We estimate similarly as in Theorem~\ref{thm:westervelt:nonlinear:solution:existence:uniqueness} and obtain
	\begin{align}
		\nonumber
		&C ||2\eta [ (\Tilde{u}_{N-1})^2_{t} - u_t ^2 + \Tilde{u}_{N-1} ((\Tilde{u}_{N-1})_{tt} - u_{tt}) + u_{tt}(\Tilde{u}_{N-1} - u)]||_{L^2(0,T;L^2(\Omega))} \\ \nonumber
		& \leq 2 C ||\eta||_{L^\infty(\Omega)} (||(\Tilde{u}_{N-1})_{t} - u_{t}||_{L^4(0,T;L^4(\Omega))} ||(\Tilde{u}_{N-1})_{t} + u_{t}||_{L^4(0,T;L^4(\Omega))}  \\ \nonumber
		& \qquad  + ||\Tilde{u}_{N-1}||_{L^\infty(0,T;L^\infty(\Omega))} ||(\Tilde{u}_{N-1} - u)_{tt}||_{L^2(0,T;L^2(\Omega))}   + ||u_{tt}||_{L^2(0,T;L^2(\Omega))} ||\Tilde{u}_{N-1} - u||_{L^\infty(0,T;L^\infty(\Omega))} ) \\ \nonumber
		& \leq  rC_0  ||\Tilde{u}_{N-1} - u||_X.
	\end{align}
	Since $\bigcup_{N \in \mathbb{N}} X_{N}$ is dense in $X$, it holds that
	\begin{align}
		\nonumber
		||L_{\alpha,\mu}(u - \text{Proj}_{X_N} u)||_{X^\prime} \leq ||L_{\alpha,\mu}||_{X,X^\prime} || u -  \text{Proj}_{X_N} u||_X \rightarrow 0, \, N \rightarrow \infty.
	\end{align}
	According to Theorem~\ref{thm:westervelt:nonlinear:solution:existence:uniqueness} we have $ q(r) := rC_0 < 1$ and by the triangle inequality we obtain
	\begin{align}
		\nonumber
		|| \Tilde{u}_{N} - u||_X \leq q(r) || \Tilde{u}_{N-1} - u ||_X + (C+1) || u -  \text{Proj}_{X_N} u||_X.
	\end{align}
	By induction, $\Tilde{u}_{0} = 0$ and setting $\Tilde{C} = C + 1$ we finally arrive at
	\begin{align}
		\nonumber
		|| \Tilde{u}_{N} - u ||_X \leq q(r)^N ||u||_X + \Tilde{C} \sum_{i=1}^N q(r)^{N-i} || u -  \text{Proj}_{X_{i}} u||_X.
	\end{align}
	Now, let $\varepsilon > 0$ be arbitrary, by the density of $\bigcup_{N \in \mathbb{N}} X_{N}$ in $X$ there exists an $N(\varepsilon) \in \mathbb{N}$ such that for all $n > N(\varepsilon)$ we have $||u - \text{Proj}_{X_{n}}u||_X < \varepsilon$. We set $N > N(\varepsilon)$, large enough such that $q(r)^{N-N(\varepsilon)} < \varepsilon$ holds. This yields 
	\begin{align}
		\nonumber
		|| \Tilde{u}_{N} - u ||_X & \leq q(r)^N ||u||_X + \Tilde{C} \sum_{i=1}^N q(r)^{N-i} || u -  \text{Proj}_{X_{i}}u||_X \\ \nonumber
		& = q(r)^N ||u||_X  + \Tilde{C} q(r)^{N - N(\varepsilon)} \sum_{i=1}^{N(\varepsilon)} q^{N(\varepsilon)-i} || u -  \text{Proj}_{X_{i}}u||_X  \\ \nonumber
		& \qquad \qquad \qquad + \Tilde{C} \sum_{i=N(\varepsilon)+1}^N q(r)^{N-i} || u -  \text{Proj}_{X_{i}}u||_X \\ \nonumber
		& < \varepsilon ||u||_X + \frac{\varepsilon \Tilde{C} c_{\text{Proj}}}{1-q}  + \frac{ \varepsilon \Tilde{C}}{1-q}  =  \varepsilon \left(||u||_X + \frac{\Tilde{C}\left( 1 + c_{\text{Proj}} \right) }{1-q}  \right),
	\end{align}
	where $c_{\text{Proj}}:= \sup_{i \in \mathbb{N}} || u -  \text{Proj}_{X_{i}}u||_X$ is finite because $\bigcup_{N \in \mathbb{N}} X_{N}$ is dense in $X$. This proves the convergence of $\Tilde{u}_{N}$ to $u$ in X.
\end{proof}
%
\section{Numerical experiments}
\label{sec:experiments}
\MarginB{vii.}
We revisit the result from Theorem~\ref{thm:iteration:scheme:convergence} where we obtained~\eqref{eq:helmholtz:systems:iteration:scheme} which simplifies to the following system of Helmholtz equations for $x \in \Omega$:
\begin{equation}
	\label{eq:helmholtz:systems:iteration:scheme:second:mentioning}
	\begin{cases}
		-\kappa^2 \hat{u}_1^{N}(x) - \Delta \hat{u}_1^{N}(x) = -\kappa^2 \left(\eta(x) \sum_{j=1:2}^{2N-3}\revision{\overline{\hat{u}_{\frac{j-1}{2}}^{N-1}(x)}}\hat{u}_{\frac{j + 1}{2}}^{N-1}(x) +\hat{h}_1(x)\right) & m = 1, \\
		-\kappa^2 m^2 \hat{u}_m^{N}(x) - \Delta \hat{u}_m^{N}(x) = \\ 
		-\kappa^2 m^2 \hat{h}_m(x) - \frac{\kappa^2  m^2 \eta(x)}{2} \left(\sum_{j=1}^m \hat{u}_j^{N-1}(x) \hat{u}_{m-j}^{N-1}(x) + 2 \sum_{j=m:2}^{2(N-1)-m}\revision{\overline{\hat{u}_{\frac{j - m}{2}}^{N-1}(x)}}\hat{u}_{\frac{j + m}{2}}^{N-1}(x) \right)   & m > 1,
	\end{cases}   
\end{equation}

with the following boundary conditions for $x \in \partial\Omega:$
\begin{align}
	\iota \beta m \omega \hat{u}_m^{N}(x) + \gamma \hat{u}_m^{N}(x) + \nabla \hat{u}_m^{N} (x)\cdot \vecn =0,
\end{align}
where $c, \omega, b >0$, $\kappa^2 = \frac{\omega^2}{c^2 + \iota m \omega b} \in \mathbb{C}$, $\Omega \subset \mathbb{R}^2$ with a $C^{1,1}$ boundary $\partial\Omega$ and in case of non constant $c(x)$, $b(x)$ we assume that $c, \frac{1}{c}, b, \frac{1}{b} \in L^\infty(\Omega)$ and obtain a space dependent wave number $\kappa(x)^2 = \frac{\omega^2}{c^2(x) + \iota \omega b(x)}$. The solution for $m=0$ is zero. In Theorem~\ref{thm:iteration:scheme:convergence} we have shown that this iteration scheme converges to the unique (weak) solution of~\eqref{eq:westervelt} as $N \rightarrow{\infty}$. We notice that for each level $m \in \mathbb{N}$ the wave number increases. Thus, conforming Galerkin FEM methods applied to this system of Helmholtz equations will encounter the so called pollution effect~\cite{BABUSKA2000, BABUSKA1995325}. It is well known that the pollution effect can be countered with an $hp$ adaptive FEM in the coefficient-constant case~\cite{Spence2023}. However, we face the Helmholtz equation with variable coefficients. Moreover, in each \revision{iteration $m$} the solutions of the previous iteration \revision{$m-1$} are used. Therefore, adapting the mesh size between iterations is not possible without interpolating the intermediate solution(s) from the previous iteration which possibly introduces additional inaccuracies. Due to the structure of the iteration scheme, the pollution effect and numerical errors affect not only solutions for higher frequencies but all of them and they are amplified throughout the iterations due to the auto-convolution on the right hand side. \revision{Hence, for our approximation scheme the number of iterations $N$ and the number of nodes/maximum diameter of the elements have an impact on the calculated solution. But, in each iteration the solutions to the Helmholtz equations can be computed in parallel because they only rely on data from the previous iteration.} 

\revision{For our numerical computations we implement a conforming 2D FEM solver of first order with linear Lagrange elements for the Helmholtz equation, employing a uniform triangulation of the domain.} We apply it to the system described in~\eqref{eq:helmholtz:systems:iteration:scheme:second:mentioning} 
\MarginB{vii.}
to demonstrate the effectiveness of our iteration scheme. The implementation of the FEM solver and the iteration scheme~\eqref{eq:helmholtz:systems:iteration:scheme:second:mentioning} \revision{is available at~\url{https://github.com/dazedsheep/FEMHelmholtzSolver}~\cite{GITSolver}}. \revision{For the source we use a regularized Dirac function which reads}
\begin{equation}
    \label{eq:results:regalurized:dirac}
     \hat{h}(x) = \Tilde{\delta}(x) = 
    \begin{cases}
        \frac{1}{4 \zeta} \left( 1 + \cos\left(\pi \frac{||x - x_0||_{l^2}}{2 \zeta}\right) \right) & ||x-x_0||_{l^2} \leq 2\zeta, \\
        0 & \text{else},
    \end{cases}
\end{equation}
where $x_0$ denotes the point source location. We fix the excitation frequency and consider the excitation to be periodic. Therefore, our constructed source is indeed in $L^2(0,T;L^2(\Omega))$.

\MarginA{9.}
\MarginB{i.}
\revision{
We investigate the impact of the frequency and the thereby caused pollution effect on numerical solutions computed by~\eqref{eq:helmholtz:systems:iteration:scheme:second:mentioning} by the following simulation study. The computational setup is $\Omega = B_{0.05}(0) \subset \mathbb{R}^2$, $B/A = 5$ in the whole domain, as well as $\rho_0 = 1000$ kg/$\text{m}^3$, $c = 1450$ m/s (water), $\text{m}/\text{s}$, $\gamma = 1$, $b = 10^{-9}$  $\text{m}^2/\text{s}$, $\zeta = 0.01$ resembling a source with a diameter of $0.1$ with a pressure source strength of $||\hat{h}||_{L^2(\Omega)} = 25 \cdot 10^3$ with a peak pressure of approximately $66$ MPa and a frequency of $100$ kHz. The impedance coefficient for each Helmholtz equation is set to $\beta = \frac{1}{c}$ to obtain absorbing boundary conditions mimicking the Sommerfeld radiation condition (if the source is placed in the center) and the mesh diameter is denoted by $h$. We calculate the time dependent $L^2$ error $e_j(t)$ of solutions of consecutive iterations by 
\begin{equation}
    \label{numerical:study:error}
    e_j(t) := || p^{j-1}(t, \cdot) - p^j(t, \cdot)||_{L^2(\Omega)},
\end{equation}
for $2 \leq j \leq N$, where $p^j(t,x)$ is calculated using~\eqref{pNansatz} and set the number of maximum iterations $N$ to $35$. By the choice of the nonlinearity being constant in the whole domain, the space dependent wavenumber $\kappa(x)$ is constant for each level $m$. We compute five solutions for different mesh parameters, starting with a mesh diameter of $h_{\text{FEM}}=0.00125$, i.e., 6141 nodes/degrees of freedom (DOF) and halving the mesh parameter after the computation of a solution. Figure~\ref{fig:pollution:effect} shows the $L^2$ error $e_j(0)$, $2 \leq j \leq 35$, for every iteration of each solution for different values of the element size. The highest wavenumber that we encounter in this experiment is approximately $298$. For each solution, we indicate where the pollution effect occurs noticeably. It is well known that $h_{\text{FEM}} |\kappa| \lesssim 1$ is a criterion such that solutions do not exhibit the pollution effect~\cite{BABUSKA1995325, Sauter2011}. This is confirmed by our simulations. However, once we encounter the pollution effect the solution blows up due to the errors introduced. These errors get amplified in consecutive iterations because of the autoconvolution of the previous solution(s). Hence, one may use $h_{\text{FEM}} |\kappa_m| \gtrsim 1$, for $1\leq m \leq N$ or in the case of a space dependent wavenumber one may use $h_{\text{FEM}} ||\kappa_m||_{L^\infty(\Omega)} \gtrsim 1$ as a criterion for selecting the the maximum number of iterations.
\begin{figure}
    \centering
    \includegraphics[width=.8\linewidth]{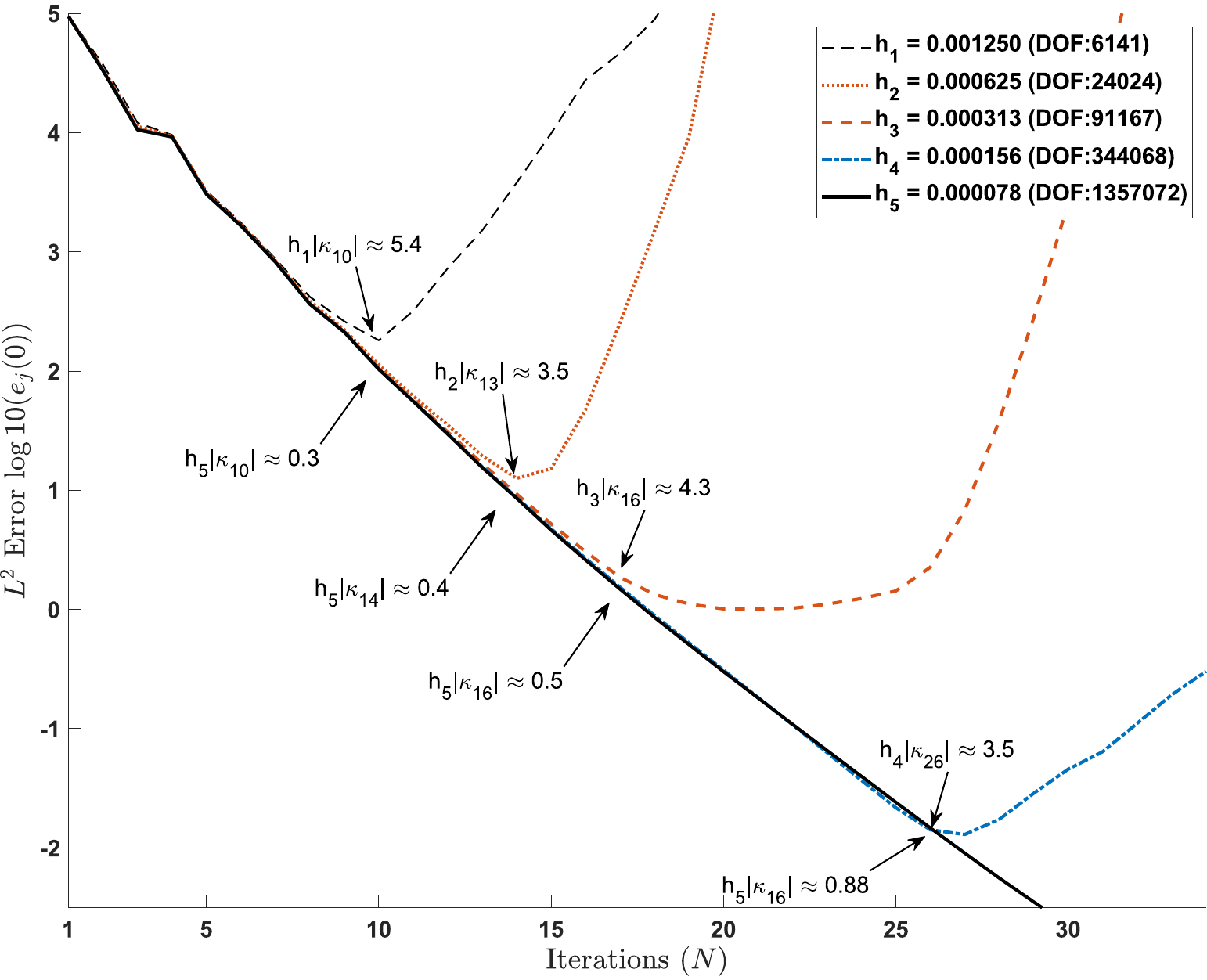}
	\caption{\revision{Effect of the mesh parameter $h$ and iterations $N$ on the error $e_j(0)$ of the solution, showcasing the impact of the pollution effect.}}
    \label{fig:pollution:effect}
\end{figure}
}

\MarginA{10.}
\MarginB{ii.}
\revision{
Theorem~\ref{thm:westervelt:nonlinear:solution:existence:uniqueness} states that a unique (weak) solution to the nonlinear periodic Westervelt equation only exists if the $L^2$-norm of the excitation is smaller than some threshold $\delta > 0$, which depends on the domain, the speed of sound, the spatially dependent nonlinearity $\eta$ and the boundary parameters. It is important to emphasize that $\eta$ itself depends on the speed of sound, and that the non-degeneracy condition $1 - 2\eta(x)p(t,x) > 0$ holds almost everywhere, cf. \eqref{degeneracy}. Consequently, sufficiently high pressure can lead to degeneracy. Thus, our next objective is to investigate the impact of $||\hat{h}||_{L^2(\Omega)}$ on the solution. To this end, we fix all other parameters as described above and vary the input pressure and excitation frequency for a fixed speed of sound. The mesh size is set to $5 \cdot 10^{-4}$ (DOF 35439), and the number of iterations $N$ is fixed at 30, ensuring that $h |\kappa_N| \ll 1$. To ensure that increased pressure  source strength does not introduce pollution effects, we verify selected cases using finer mesh resolutions.}

\revision{Figure~\ref{fig:saturation:effect:pressure:low:c} presents the results for $c = 50$ and three excitation frequencies: 100, 200, and 300 Hz. A linear fit based on the low-pressure solutions is included to demonstrate that, under low-pressure conditions, the $L^2$-norm of the solution increases linearly with $||\hat{h}||_{L^2(\Omega)}$. These trends are depicted by lines corresponding to each excitation frequency. The degeneracy condition is indicated as well. As long as $||\eta p(0,\cdot)||_\infty < 1/2$, the numerical solutions align with theoretical predictions. For small pressure source strenghts, the $L^2$-norm of the solution exhibits linear growth until saturation effects become significant, at which point nonlinear attenuation alters the behavior~\cite{PanfilovaSlounWijkstraSapozhnikovMischi:2021}. This manifests as a redistribution of energy from lower to higher harmonics. Once saturation is reached, the system begins to degenerate, ultimately leading to a blow-up of the solution. Figure~\ref{fig:saturation:effect:pressure:high:c} shows the results for higher sound speed $c=1450$ m/s and frequencies: 5, 7.5, and 10 kHz. The effects of saturation and nonlinear attenuation are clearly visible and are highlighted by comparing a linear fit -- based on low-energy solutions -- to a polynomial fit that captures the nonlinear behavior at higher pressures. The transfer of energy to higher harmonics with increasing pressure  source strength is further illustrated in Figure~\ref{fig:saturation:effect:pressure:solutions}. These findings are also of practical significance. For a sound speed of $1450$ m/s, the peak pressure emitted by the source ranges from $1.3$ MPa to $4$ MPa at $10 kHz$, from $133$ kPa to $1.5$ MPa at $7.5$ kHz and from $26$ kPa to $77$ kPa at $5$ kHz.  In all of these scenarios, degeneracy is not observed, indicating that the excitation remains below the threshold $\delta$ specified in Theorem~\ref{thm:westervelt:nonlinear:solution:existence:uniqueness}. 
\begin{figure}
    \centering
    \includegraphics[width=.7\linewidth]{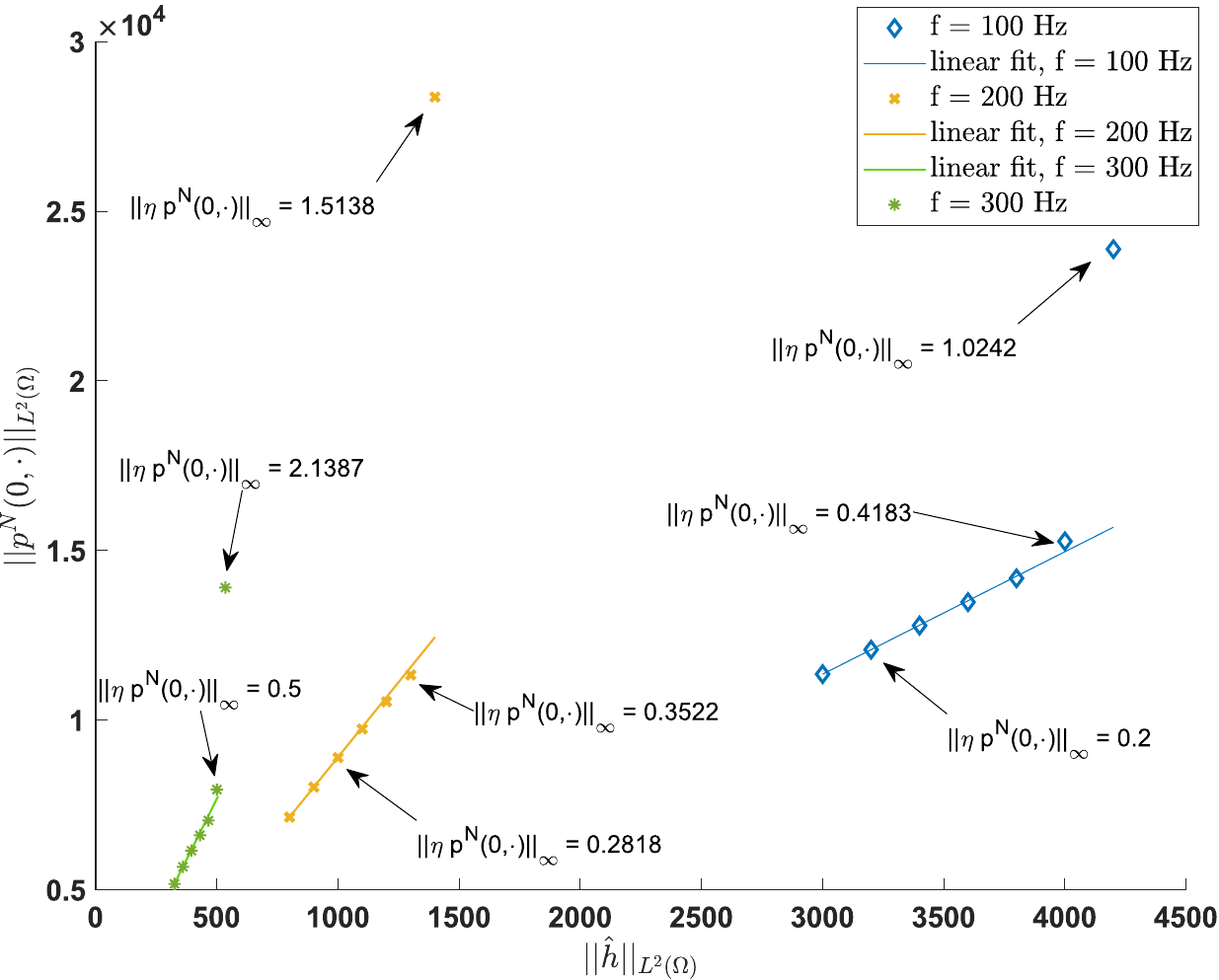}
	\caption{\revision{Effect of the pressure source strength on the solution for $c=50$ m/s for the frequencies 100, 200, and 300 Hz and a $B/A$ value of 5 in the whole domain.}}
    \label{fig:saturation:effect:pressure:low:c}
\end{figure}
\begin{figure}
    \centering
    \includegraphics[width=.7\linewidth]{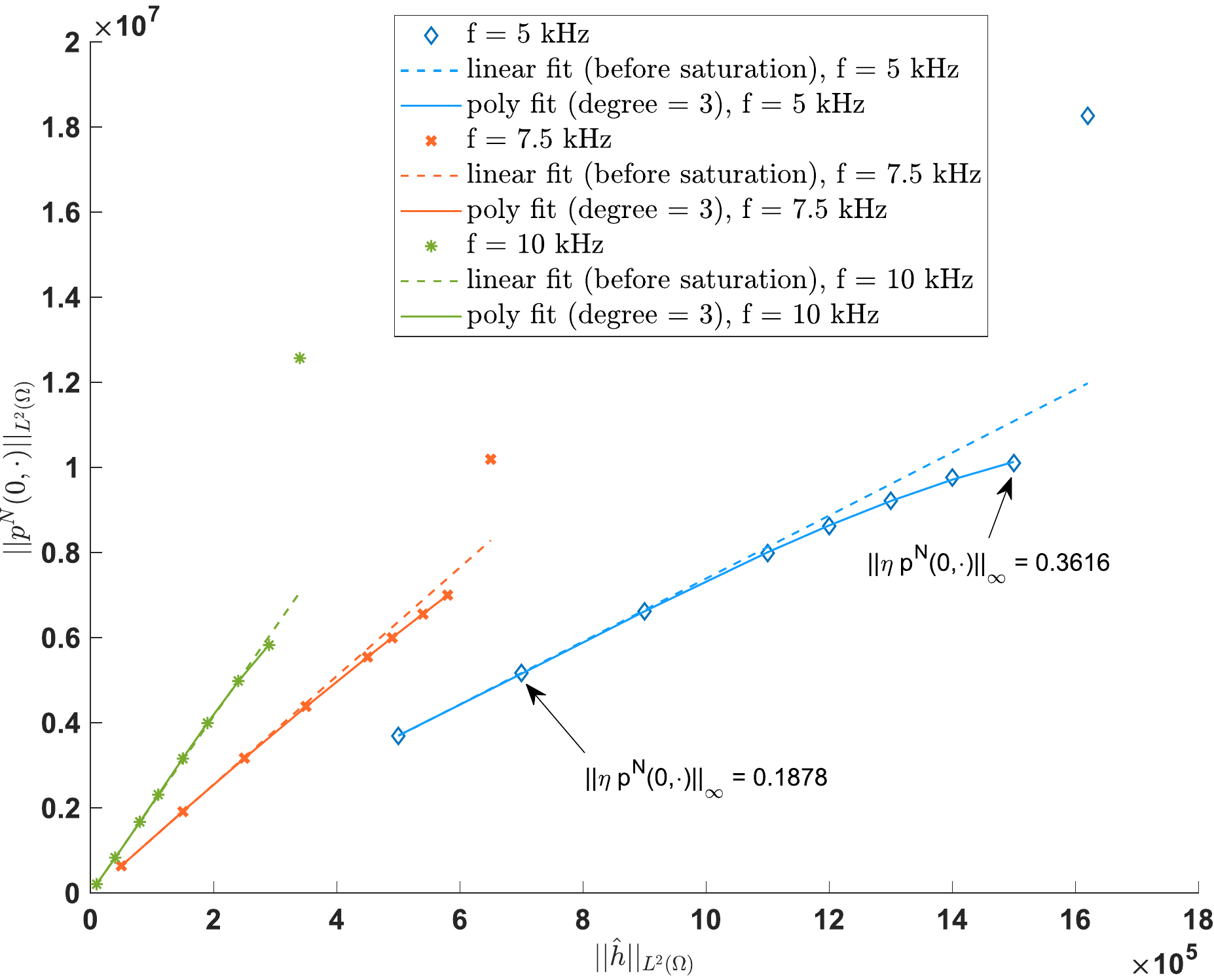}
	\caption{\revision{Effect of the pressure source strength on the solution for $c=1450$ m/s for the frequencies 5, 7.5, and 10 kHz and a $B/A$ value of 5 in the whole domain. The saturation effect appears before the solution blows up and leads to a shift of energy from lower harmonics to higher ones, hence, leading to a reduced $L^2$-norm of the computed solution.}}
    \label{fig:saturation:effect:pressure:high:c}
\end{figure}
\begin{figure}
    \centering
    \includegraphics[width=.62\linewidth]{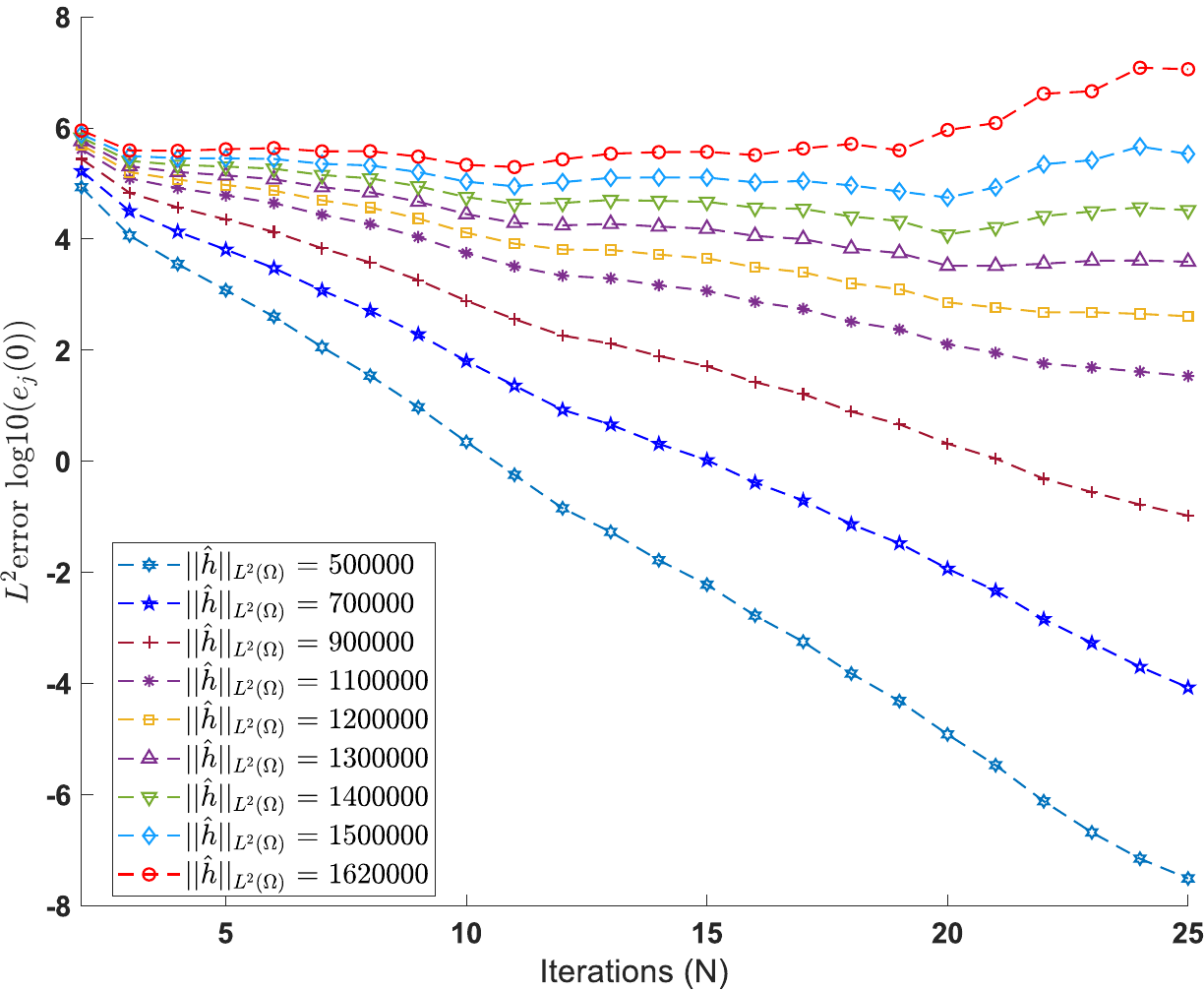}
	\caption{\revision{$L^2$ error of consecutive numerical solutions for different pressure source strenghts, $c=1450$ m/s for $10$ kHz and a $B/A$ value of 5 in the whole domain.}}
    \label{fig:saturation:effect:pressure:solutions}
\end{figure}
}

\MarginA{10.}
\revision{We run the simulations on an Intel Core i7 $13700$K equipped with 32 GB RAM. Our solver is implemented with MATLAB. As mentioned above, the iterative approximation scheme can be easily parallelized and  we apply the built-in \textit{parfor} for this purpose, using a parallel pool with 16 threads. Figure~\ref{fig:runtime} shows the runtime for the same configuration as above for $||\hat{h}||_{L^2(\Omega)} = 25 \cdot 10^3$ and a frequency of $5$~kHz for different element sizes $h_{\text{FEM}}$ and number of iterations.  We report the execution time for $N=30$ with respect to the element size $h_{\text{FEM}}$, number of nodes and elements in Table~\ref{tab:runtime}.
\begin{figure}
    \centering
    \includegraphics[width=.7\linewidth]{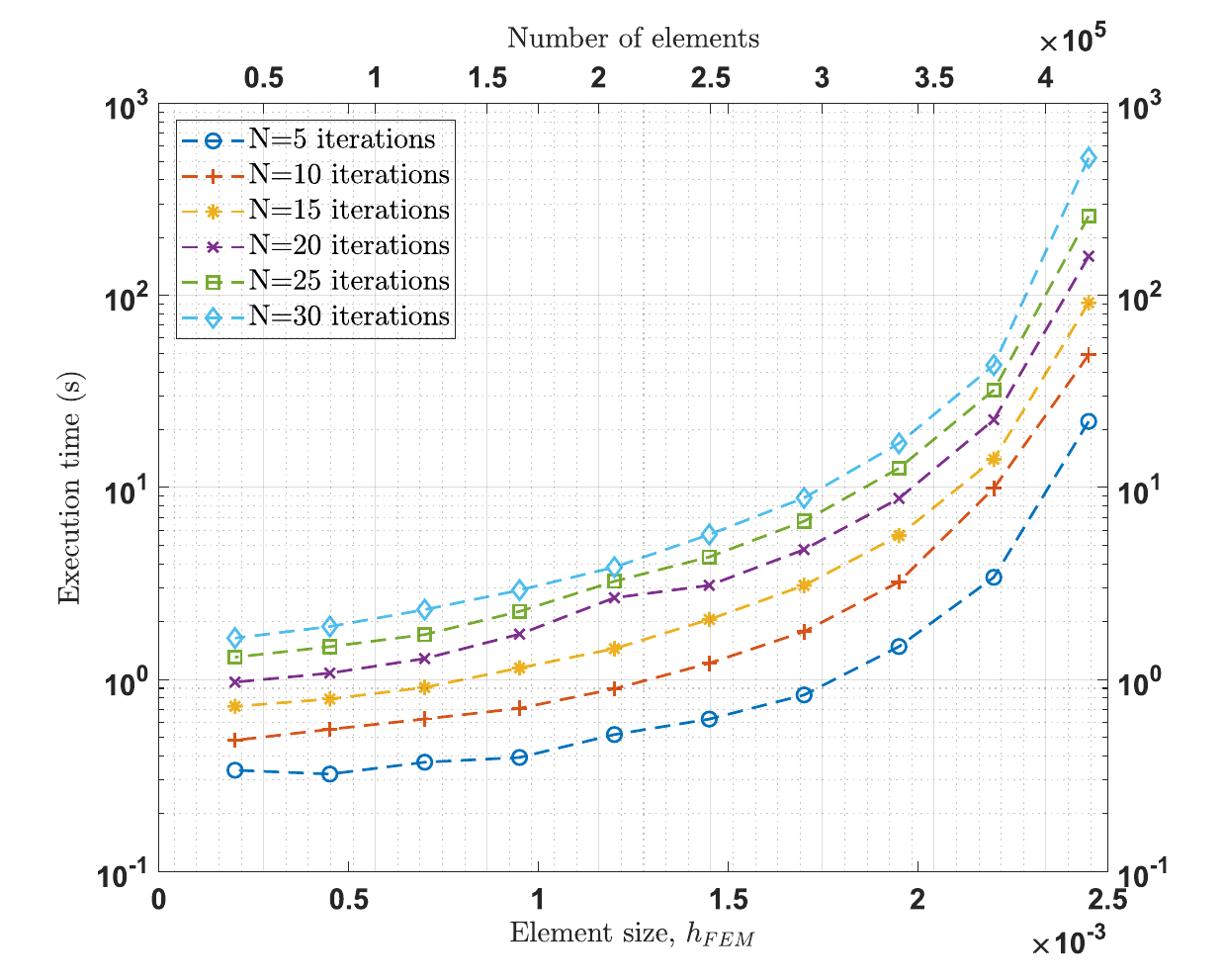}
	\caption{\revision{Execution time of the iterative approximation scheme for different element sizes and number of iterations.}}
    \label{fig:runtime}
\end{figure}
\begin{table}[]
\begin{tabular}{|l|c|c|c|}
\hline
\multicolumn{1}{|c|}{$h_{\text{FEM}}$} & Number of nodes & \multicolumn{1}{l|}{Number of elements} & \multicolumn{1}{l|}{Execution time (s)} \\ \hline
0.0022                                 & 2075            & 4003                                    & 1.8876                                  \\ \hline
0.0017                                 & 3369            & 6551                                    & 2.9202                                  \\ \hline
0.0012                                 & 6733            & 13203                                   & 5.6883                                  \\ \hline
0.0007                                 & 19375           & 38299                                   & 16.9757                                 \\ \hline
0.0002                                 & 214592          & 427609                                  & 521.4009                                \\ \hline
\end{tabular}
\caption{\revision{Execution time for iterative approximation scheme~\eqref{eq:helmholtz:systems:iteration:scheme:second:mentioning} for $N=30$ iterations, domain $B_{0.05}(0)$, $B/A=5$ in the whole domain and pressure source of $25 \cdot 10^3$ at a frequency of $5$~kHz.}}
\label{tab:runtime}
\end{table}
}

\revision{Next, we show two examples which are of practical use.} The domain of propagation is selected to be $ \Omega = B_{0.2}(0) \subset \mathbb{R}^2$ and a monopole source is placed at $(0,0)^T$. The parameters used in the simulations are chosen as typical for water, in particular $B/A = 5$~\cite{PANTEA20131012, STURTEVANT2012}, $\rho_0 = 1000$ kg/$\text{m}^3$, $\omega = 2\pi \frac{1}{10^{-5}}$ $\text{rad}/\text{s}$, $b = 10^{-9}$ $\text{m}^2/\text{s}$ , mesh parameter $0.0005$ m, $c = 1450$ $\text{m}/\text{s}$, $\gamma = 1$, $\zeta = 0.0005$, sound pressure $0.5$ MPa, and number of iterations $15$. We further set the impedance coefficient for each Helmholtz equation to $\beta = \frac{1}{c}$ to obtain absorbing boundary conditions mimicking the Sommerfeld radiation condition (if the source is placed in the center). 

Figure~\ref{fig:study2:result:nonlinear:vs:linear:100khz} depicts the acoustic pressure along the line $(-0.2,0)^T$ to $(0.2,0)^T$ where the source is located at $(0,0)^T$. Due to attenuation, the further away we move from the source, the greater the impact of the harmonics introduced by the nonlinearity parameter is on the waveform. Figure~\ref{fig:study2:100khz:freq} illustrates the frequency components present at different spatial locations, showing that the first three harmonics remain above $-40$ dB. Figure~\ref{fig:frequency_scenario1_impact_nonlinearity} presents the relative pressure at different locations in the domain. In Figure~\ref{fig:freq_in_source_sc1}, which corresponds to the source location, the excitation frequency is more than $20$ dB stronger than other frequency components, as expected. This is also evident in Figure~\ref{fig:study2:result:nonlinear:vs:linear:100khz} at $x=0$, where the solution with a nonzero nonlinearity parameter closely matches the linear case. However, this changes rapidly as we move away from the source. At $(0.05,0)^T$, cf. Figure~\ref{fig:freq_near_source_sc1}, the contribution of the excitation frequency is significantly attenuated, and this effect is also visible in Figure~\ref{fig:study2:result:nonlinear:vs:linear:100khz} at $x=0.05$. Further away from the source, at $(0.2,0)^T$, the relative pressure of the excitation frequency becomes comparable to that of the first two harmonics. In Figure~\ref{fig:study2:100khz:time}, we plot the acoustic pressure over time in a single point within $\Omega$. Comparing the nonlinear case to the linear one, the non-uniform phase velocity is clearly visible in the nonlinear case. 
\begin{figure}
    \centering
    \includegraphics[width=.5\linewidth]{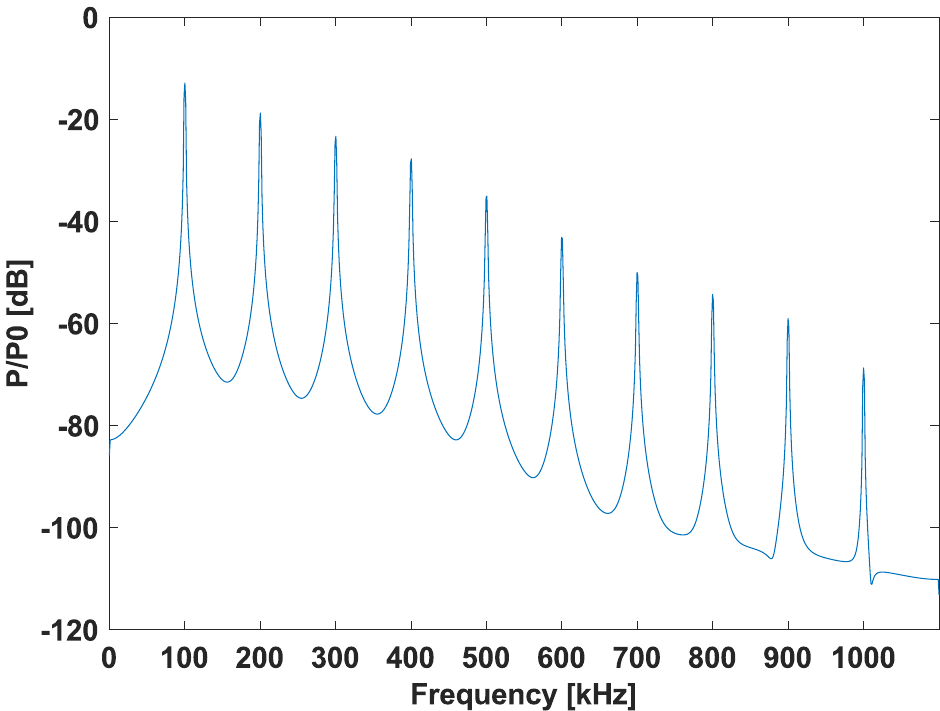}
		\caption{Frequency components of $p(t,x)$ at $(0,-0.1)^T$, where we applied a Hann window~\cite{oppenheim1999discrete} in time.}
		\label{fig:study2:100khz:freq}
\end{figure}
\begin{figure}[h]
	\centering
	\includegraphics[width=1.0\textwidth]{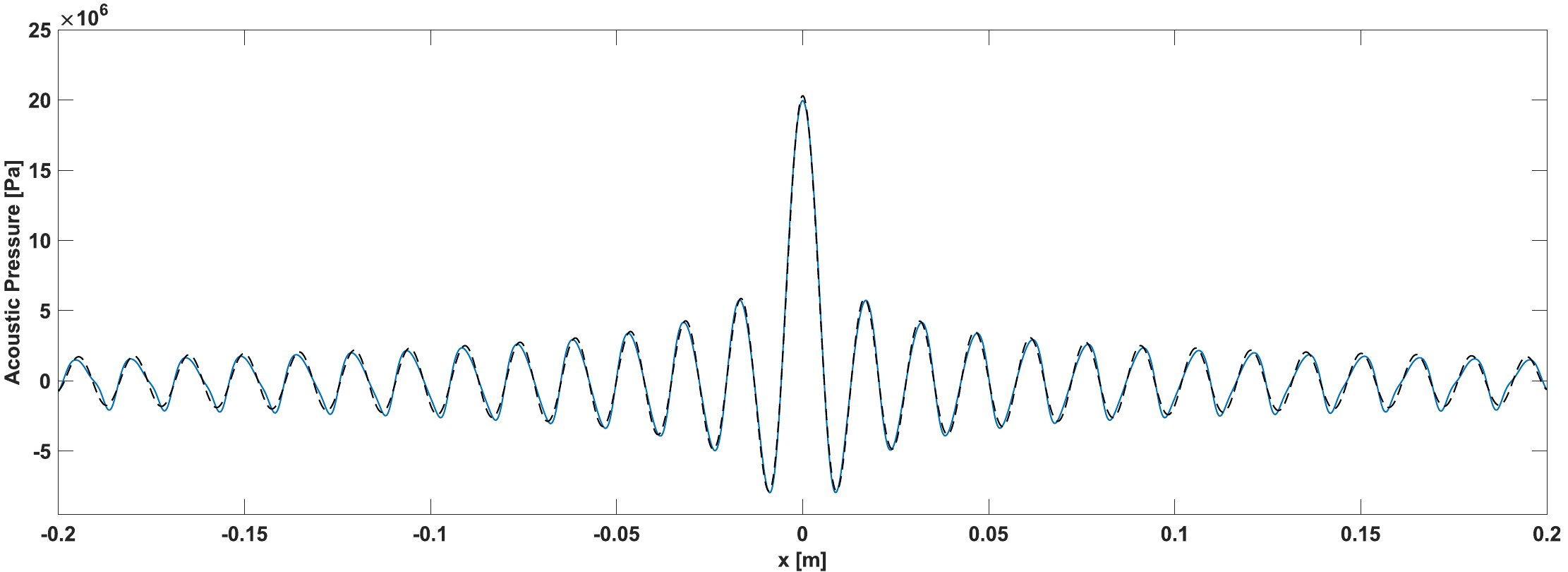}
	\caption{The propagation of the wave along a line originating at $(-0.2,0)^T$ and ending at $(0.2,0)^T$. The black dotted line denotes the linear case where 
    $\eta = 0$ 
    at $t=0$. The effect of nonlinearity on the waveform becomes more pronounced the farther one moves from the source.}
	\label{fig:study2:result:nonlinear:vs:linear:100khz}
	\centering
\end{figure}
\begin{figure}
\centering
    \begin{subfigure}{0.3\textwidth}
        \includegraphics[width=1.0\linewidth]{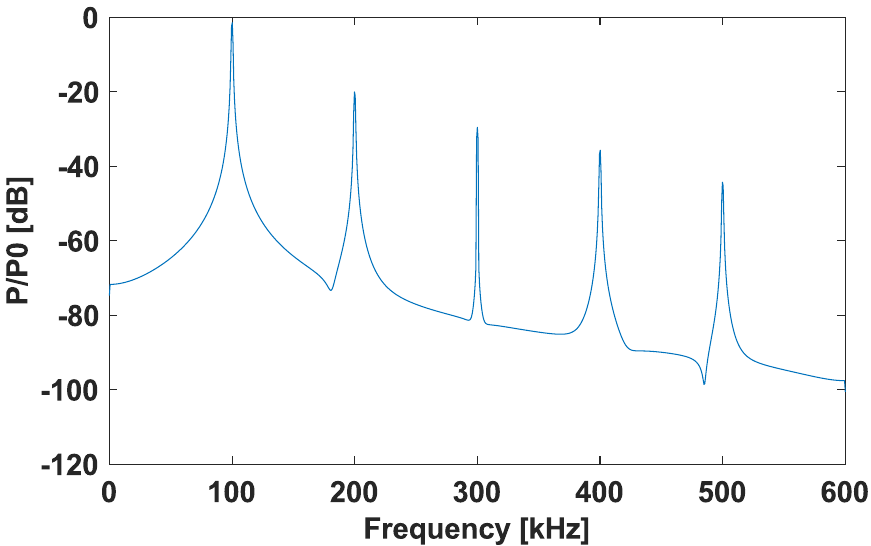} 
        \caption{Relative pressure of frequency components at $(0,0)^T$.}
        \label{fig:freq_in_source_sc1}
        \end{subfigure}
    \begin{subfigure}{0.3\textwidth}
        \includegraphics[width=1.0\linewidth]{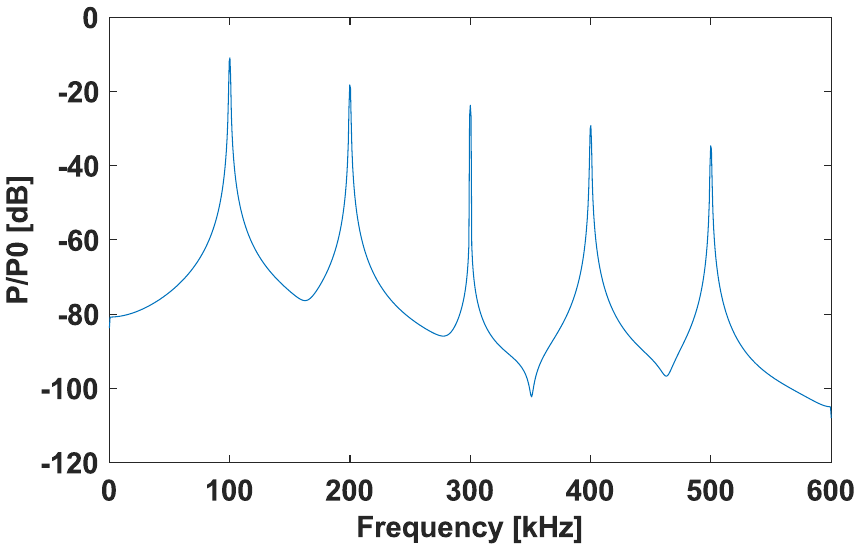} 
        \caption{Relative pressure of frequency components at $(0.05,0)^T$.}
        \label{fig:freq_near_source_sc1}
        \end{subfigure}
    \begin{subfigure}{0.3\textwidth}
        \includegraphics[width=1.0\linewidth]{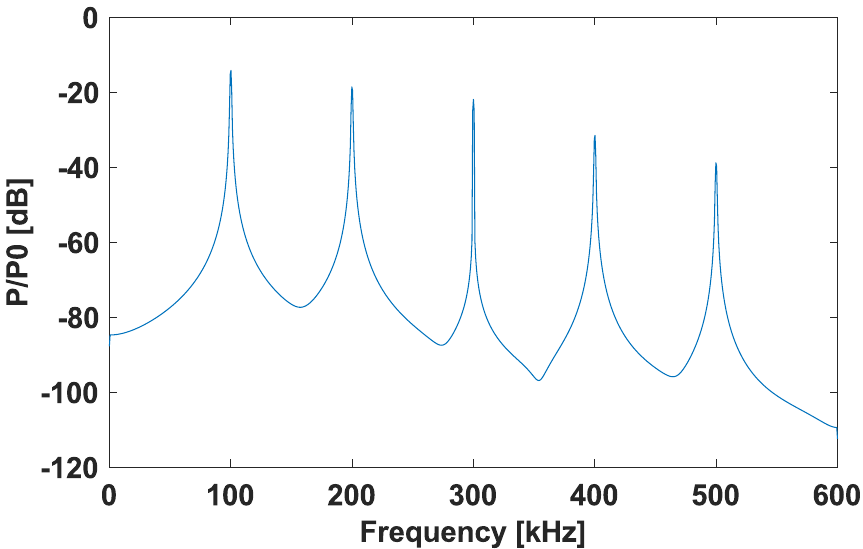} 
        \caption{Relative pressure of frequency components at $(0.2,0)^T$.}
        \label{fig:freq_far_from_source_sc1}
        \end{subfigure}
    \caption{Relative pressure of frequency components in different locations of the domain, $P_0$ denotes the maximum pressure in the domain.}
    \label{fig:frequency_scenario1_impact_nonlinearity}
\end{figure}
\begin{figure}
    \centering
	\includegraphics[width=0.7\textwidth]{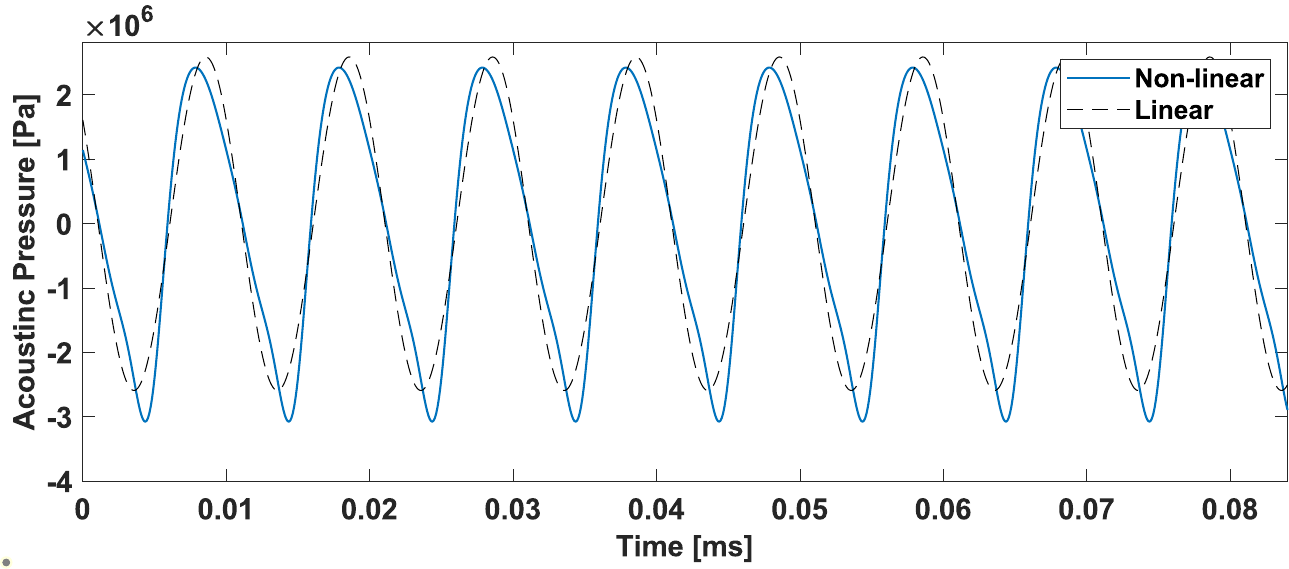}
	\caption{Evolution over time of the acoustic pressure at $(0,-0.1)^T$.}
	\label{fig:study2:100khz:time}
\end{figure}

It is well known that if the wave vectors are not perpendicular to the tangent in boundary points the employed absorbing boundary conditions cause spurious reflections~\cite{SHEVCHENKO2012461}. This is avoided in  case of a source centered in a circular domain, where the wave hits the boundary in normal direction. This is not necessarily the case for other source positions relative to the boundary and needs to be taken into account accordingly.
A self-adaptive boundary condition scheme for the quasilinear Westervelt equation in time domain has been introduced in~\cite{MUHR2019279}. The absorbing boundary conditions are adapted in~\textit{real-time} taking into account incidence wave vectors. Here, estimates of the solution computed in time step $t_{n-1}$ are used to adapt the boundary conditions in the next time step $t_n$. This works remarkably well in space-time finite element methods. However, we are facing~\eqref{eq:helmholtz:systems:iteration:scheme:second:mentioning}, a system of Helmholtz equations. Thus, adaptation over time is not an option.
If there is only a single source or if $\nabla p$ is known, the gradient method can be employed~\cite{GORDON2015295}. In our iteration scheme, each iteration of~\eqref{eq:helmholtz:systems:iteration:scheme:second:mentioning} virtually adds new \textit{sources} with different frequencies, though. A detailed investigation and the derivation of adaptive absorbing boundary conditions for~\eqref{eq:helmholtz:systems:iteration:scheme:second:mentioning} is beyond the scope of this paper.

Next, we turn to a scenario that resembles a watershot experiment (cf. Figure~\ref{fig:study:scenario}). Here, we are interested in boundary measurements of a domain filled with water. This type of experiment is of high interest for evaluating the accuracy of inverse methods~\cite{kaltenbacher2023nonlinearityparameterimagingfrequency} for the reconstruction of the nonlinearity parameter. We consider a circular air domain with a radius of one meter and a concentrically placed circular water tank with a radius of $0.35$ meter including three phantoms. The phantoms differ in size and material. Their radii are $0.05, 0.04$ and $0.06$ meters, respectively. \revision{We set the element size to $h_\text{FEM} = 0.0008$, hence avoiding the pollution effect.} The sound speed is set accordingly, that is,  $344$ m/s for air and $1450$ m/s for water. We use the following $B/A$ values which determine the nonlinearity parameter $\eta$: air (1), water (5), first phantom  (9), second phantom (10), third phantom (12). A linear array with nine elements spaced by $\lambda / 4$ in horizontal direction, is placed in the center of the domain. Each element of the linear array emits a $10$ kHz sinusoidal ultrasound wave with a peak pressure of \revision{$10$~MPa with $\zeta = 0.002$, cf.~\eqref{eq:results:regalurized:dirac} and Figure~\ref{fig:linear_array_sources}. All sources have the same angular component. Thus, this design of the sources leads to a superposition of the emitted wave into the vertical directions.}\MarginB{ix.} We use~\eqref{eq:helmholtz:systems:iteration:scheme:second:mentioning} to compute the first six harmonic solutions which are then used to approximate $p(t,x)$. Figure~\ref{fig:study:approximate:pressure:profile} depicts the approximation of $p(0,x)$ with all three phantoms being present. The effect of the nonlinearity parameter and the different propagation speeds is clearly visible. 
In view of the imaging task of reconstructing inclusions from boundary observations, we are interested in boundary measurements of the domain filled with water for the cases of no phantom, one phantom, two phantoms and three phantoms. To this end, $p_j(t,x)$, for $j \in \{0,1,2,3\}$ denotes the solution with no phantoms, phantom one, phantom two, or phantom three present. 
Solutions with combinations of phantoms are denoted by the corresponding indices, i.e., $p_{1,2}(t,x)$ with phantoms one and two and $p_{1,2,3}(t,x)$ with all three phantoms present. Figure~\ref{fig:study:pressure:on:boundary} displays the boundary values relative to the location of the source on the surface of the water tank. In the left column we show the difference between the solutions without phantoms and single phantoms present. The right column displays the solution without phantoms, and the difference to having two (the first and second) and three phantoms present. The first and second phantom have similar $B/A$ values but differ in size and position. This is reflected in the first two figures in the left column of Figure~\ref{fig:study:pressure:on:boundary}. The third phantom has a $B/A$ value of $12$ and lies closest to the source; this already causes the boundary values to differ by more than $1$ kPa. 
\begin{figure}
\centering
    \begin{subfigure}{0.45\textwidth}
        \centering
        \includegraphics[width=1.0\linewidth]{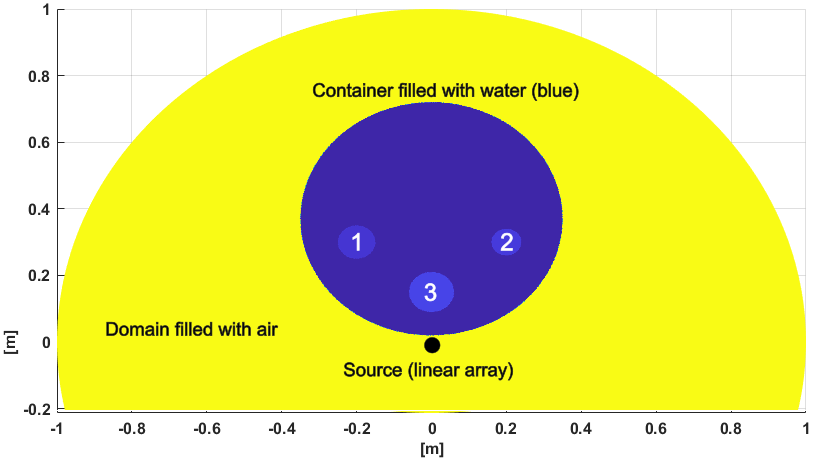} 
        \caption{\revision{Container filled with water and three phantoms inside, placed in a circular domain filled with air.}}
        \label{fig:study:scenario}
        \end{subfigure}
    \begin{subfigure}{0.45\textwidth}
        \centering
        \includegraphics[width=0.9\linewidth]{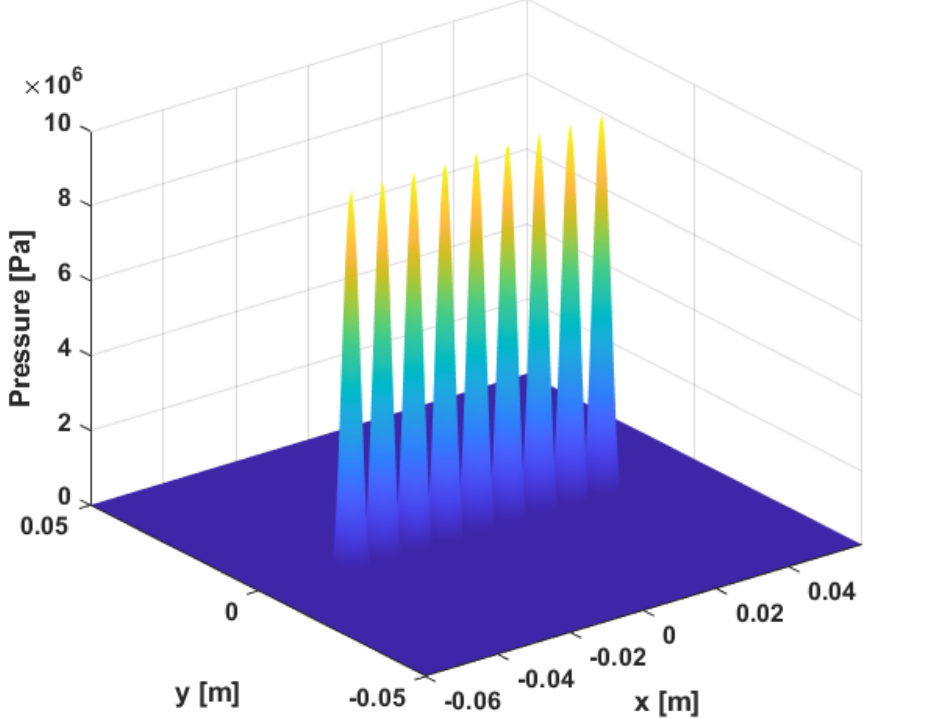} 
        \caption{\revision{The sources arranged in a linear array with $\lambda / 4 $ spacing.}}
        \label{fig:linear_array_sources}
        \end{subfigure}
    \caption{\revision{The domain for the watershot experiment with three different phantoms (left), and the sources arranged in a linear array (right).}}
    \label{fig:simulation_water_shot_setup}
\end{figure}

\begin{figure}[h]
	\centering
	\includegraphics[width=1.0\textwidth]{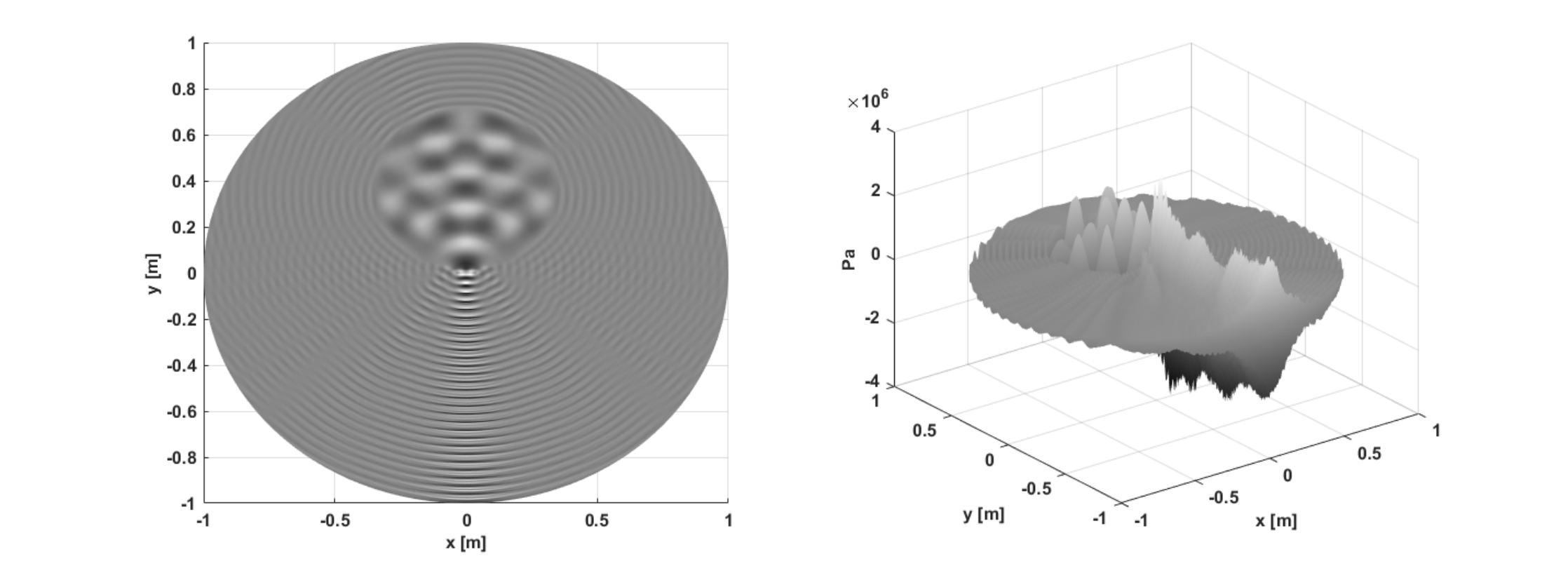}
	\caption{Simulation result of $p_{1,2,3}(0,x)$, left x-y view of $p_{1,2,3}(0,x)$ and right a 3D plot of $p_{1,2,3}(0,x)$.}
	\label{fig:study:approximate:pressure:profile}
	\centering
\end{figure}
\begin{figure}[h]
	\centering
	\includegraphics[width=0.9\textwidth]{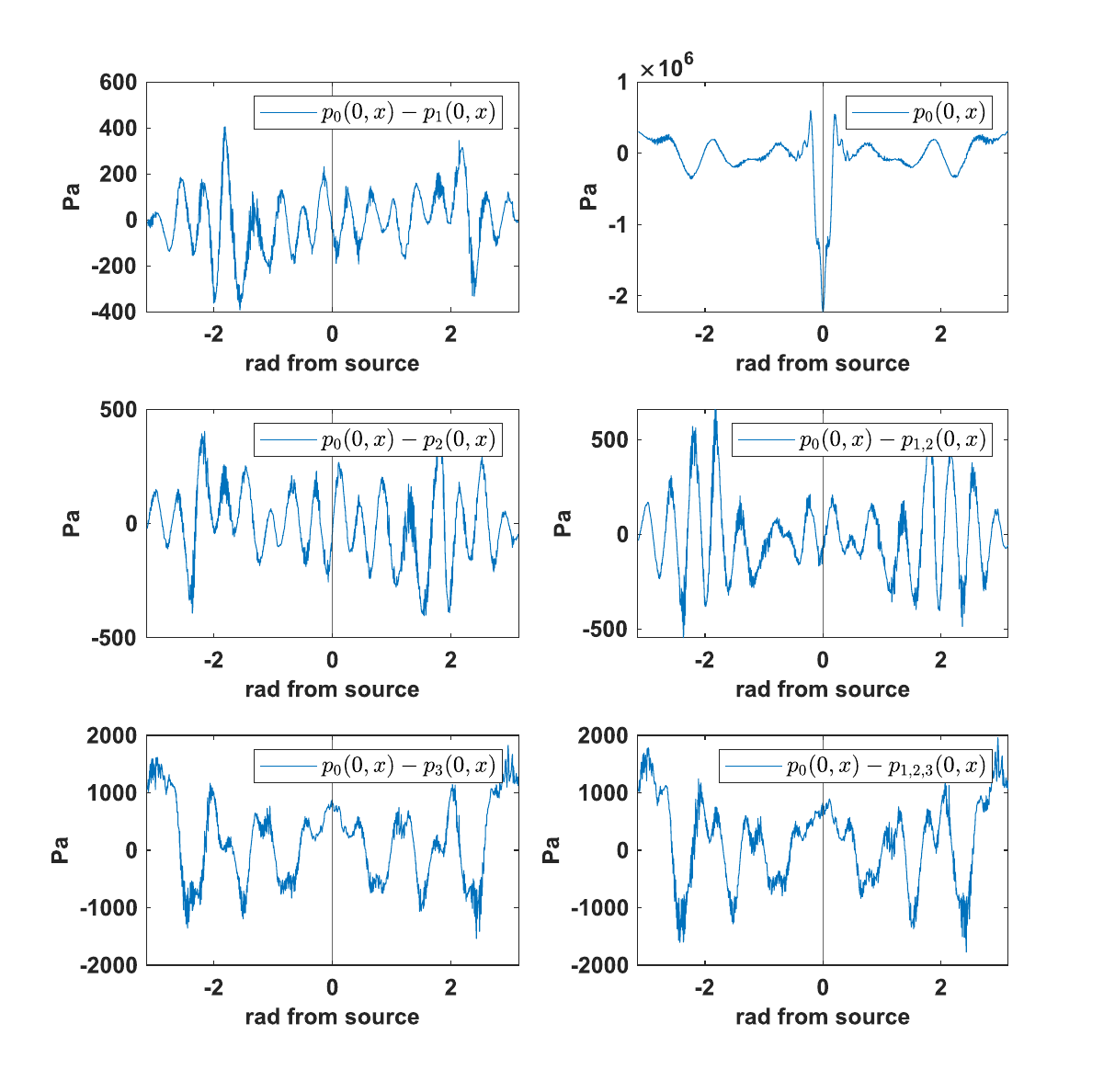}
	\caption{Boundary values of the container filled with water with different phantoms present. On the left side we show the difference between $p_0(0,x)$ and $p_j(0,x)$, $j \in \{1,2,3\}$, the right side of the figure shows the difference between $p_0(0,x)$ and the solutions including two, and three phantoms. The x-axis denotes the angle in polar coordinates, with the source being located at zero radian.}
	\label{fig:study:pressure:on:boundary}
	\centering
\end{figure}

\section{Conclusions}
\label{sec:conclusions}
In conclusion, our study aims to provide analytic and numerical tools for ultrasound imaging, particularly focusing on the nonlinear periodic Westervelt equation. This equation is pivotal in understanding tissue interactions and discrimination in ultrasound diagnostics. Through our investigation, we have shown that solutions to the nonlinear periodic Westervelt equation exist under certain smallness assumptions on the excitation. Additionally, we have introduced an iterative scheme that approximates solutions in frequency domain. 

In this work we considered an excitation in the interior of the domain, which has immediate applications in invasive ultrasound, such as intravascular ultrasound imaging.
Also experimental settings in which ultrasound is generated by a transducer array located outside the object to be imaged can be described this way 
-- e.g., with the source and imaging object immersed in a water tank. However, in clinical applications one envisages ultrasound  basically just propagating through the body, with the transducer array being attached to its surface, thus the source is located on a part of the boundary of the computational domain $\Omega$. Therefore, our future research will include investigations on the situation where the excitation is localized on $\Gamma \subset \partial\Omega$.

Also the development and use of enhanced absorbing boundary conditions~\cite{engquist1977absorbing, GIVOLI2004}  for a nonlinear wave equation in frequency domain appears to be a relevant topic of research in our context. Based on the results obtained here, we plan to provide precise and efficient reconstruction schemes for nonlinear ultrasound tomography, which amounts to reconstructing space dependent coefficients $c(x)$ and/or $\eta(x)$ in \eqref{eq:westervelt}.

\section*{Acknowledgments}
This research was funded in part by the Austrian Science Fund (FWF) 
[10.55776/P36318]. 
For open access purposes, the author has applied a CC BY public copyright license to any author accepted manuscript version arising from this submission

\revision{The authors thank the anonymous reviewers for providing valuable comments and suggestions that have led to an improved version of the paper.}

\bibliographystyle{siamplain}
\bibliography{references}


\appendix
\section{Proof of Theorem~\ref{thm:westervelt:linear:solution:existence:uniqueness}}
\label{appendix:proof:thm:3:1}
First, we derive the weak form of~\eqref{eq:westervelt:linear:periodic}. To this end, we multiply by $-\frac{1}{\alpha}\Delta\psi$ for some  $\psi \in H^1(0,T;H^1(\Omega))\cap L^2(0,T;H^2(\Omega))$, and integrate over space time, using integration by parts in the first term (which we have augmented in order to be able to use the boundary conditions) 
\[
\begin{aligned}
&\int_0^T\int_\Omega (u_{tt}+\frac{\gamma}{\beta}u_t)(-\Delta\psi)\, dx\, dt = \int_0^T\Bigl(\int_\Omega \nabla(u_{tt}+\frac{\gamma}{\beta}u_t)\cdot\nabla\psi\, dx 
-\int_{\partial\Omega} (u_{tt}+\frac{\gamma}{\beta}u_t)\nabla\psi\cdot\vecn\, dx\Bigr) dt\\
&=\int_0^T\Bigl(\int_\Omega \Bigl(-\nabla u_t\cdot\nabla\psi_t+\frac{\gamma}{\beta}\nabla u_t\cdot\nabla\psi\Bigr)\, dx 
+\int_{\partial\Omega} \frac{1}{\beta}(\nabla u_t\cdot\vecn)\, (\nabla\psi\cdot\vecn)\, dS\Bigr) dt + \left[\int_\Omega \nabla u_t\cdot\nabla\psi\, dx \right]_0^T.
\end{aligned}
\]
Moreover, the compatibility condition 
\[
0=\int_0^T (\beta u_{t}+\gamma u+\nabla u\cdot\vecn)\,dt=\int_0^T (\gamma u+\nabla u\cdot\vecn)\,dt 
\]
(the last identity following from $u(T)=u(0)$) 
that will be  needed to concluded the boundary condition $\beta u_{t}+\gamma u+\nabla u\cdot\vecn=0$ from its time differentiated version, is taken care of by introducing an auxiliary function $h$ and imposing its periodicity.
This yields the variational formulation
\begin{equation}
    \label{eq:westervelt:linear:weak:form}
\begin{aligned}
&u\in X, \quad u(0)=u(T), \ u_t(0)=u_t(T), \ h(0)=h(T)\\
&\text{ and for all }\psi \in H^1(0,T;H^1(\Omega))\cap L^2(0,T;H^2(\Omega)), \ \zeta\in L^2(0,T;L^2(\partial\Omega)):\\
&\int_0^T\Bigl(\int_\Omega \Bigl(-\nabla u_t\cdot\nabla\psi_t+\frac{\gamma}{\beta}\nabla u_t\cdot\nabla\psi
+\frac{1}{\alpha}(c^2\Delta u+b\Delta u_t+(\frac{\gamma}{\beta}\alpha-\mu)u_t)\, \Delta\psi
\Bigr)\, dx \\
&\qquad+\int_{\partial\Omega} \frac{1}{\beta}(\nabla u_t\cdot\vecn)\, (\nabla\psi\cdot\vecn)\, dS
+\int_{\partial\Omega} (h_t-u_t-[\gamma+\vecn\cdot\nabla]u)\, \zeta\, dS
\Bigr) \,dt\\
&+\left[\int_\Omega \nabla u_t\cdot\nabla\psi\, dx \right]_0^T 
= -\int_0^T\int_\Omega  \frac{1}{\alpha} f \Delta\psi\, dx \,dt.
\end{aligned}
\end{equation}
Indeed, reversing the above integration by parts step in \eqref{eq:westervelt:linear:weak:form} yields
\begin{equation}
    \label{eq:westervelt:linear:weak:form_L2}
\begin{aligned}
&u\in X, \quad u(0)=u(T), \ u_t(0)=u_t(T), \ \int_0^T [\gamma+\vecn\cdot\nabla]u\,dt=0\\
&\text{ and for all }\psi \in H^1(0,T;H^1(\Omega))\cap L^2(0,T;H^2(\Omega)):\\
&\int_0^T\Bigl(\int_\Omega \frac{1}{\alpha}\Bigl(\alpha u_{tt}-c^2\Delta u-b\Delta u_t+\mu u_t-f\Bigr)\, (-\Delta\psi)\, dx + \int_{\partial\Omega} \frac{1}{\beta}(\beta u_{tt}+\gamma u_t+\nabla u_t\cdot\vecn)\, (\nabla\psi\cdot\vecn)\, dS\Bigr) \,dt
= 0.
\end{aligned}
\end{equation}

\smallskip

Second, we employ a Galerkin method. For this purpose, we utilize the fact that the eigenfunctions $(\phi_n)_{n\in \mathbb{N}}$ of the impedance Laplacian $-\Delta_\gamma$, that is, 
\[
-\Delta \phi_k = \lambda_k\phi_k \text{ in }\Omega, \qquad 
[\gamma+\vecn\cdot\nabla]\phi_k =0 
\text{ on }\partial\Omega,\]
form an orthogonal basis of $H^1(\Omega)$, an orthonormal basis of $L^2(\Omega)$ and  with $\tilde{V}_n := \text{lin}\{\phi_1, \ldots, \phi_n\}$, $\bigcup_{n\in \mathbb{N}} \tilde{V}_n$ is dense in $H^1(\Omega)$~\cite{evans2010}. 
As an ansatz space for $h$ we use the sequence $(\eta_k)_{k\in\mathbb{N}}$ defined by $\eta_k=\text{tr}_{\partial\Omega}\phi_k$ which is indeed linearly independent by construction and whose span is dense in $L^2(\partial\Omega)$, due to the trace theorem; this choice guarantees that the boundary trace of $u_{n\,t}+[\gamma+\vecn\cdot\nabla]u_n$ (which coincides with the boundary trace of $u_{n\,t}$ by construction of the basis functions $\phi_k$) 
can be used as a test function later on in the energy estimates.  
Thus altogether, we arrive at the Galerkin ansatz space $V_n := \text{lin}\{\phi_1, \ldots, \phi_n\}\times\text{lin}\{\eta_1, \ldots, \eta_n\}$ whose union $\bigcup_{n\in \mathbb{N}} V_n$ is dense in $H^1(\Omega)\times L^2(\partial\Omega)$.
Now, plugging the ansatz $(u_n(t,\cdot,),h_n(t, \cdot,)) := \sum_{k=1}^n( (\mathfrak{a}_k(t) \phi_k,\mathfrak{c}_k(t)\eta_k)\in V_n$ into~\eqref{eq:westervelt:linear:weak:form}
and testing with $\phi_j$, $\eta_j$, $j\in\{1,\ldots,n\}$
\footnote{Formally, after reversing the time integration by parts in the highest order term, we set $\psi(x,s)=\delta_t(s)\phi_j(x)$, $\zeta(x,s)=\delta_t(s)\eta_j(x)$; more precisely we set $\psi(x,s)=\chi_{[0,t]}(s)\phi_j(x)$, $\zeta(x,s)=\chi_{[0,t]}(s)\eta_j(x)$ and differentiate with respect to $t$ then.} 
yields the system of ODEs
\begin{equation}
    \label{eq:westervelt:linear:galerkin:form:full:ODE:system:a}
\begin{aligned}
    &   
\begin{bmatrix} 
I_{n \times n} & 0_{n \times n}& 0_{n \times n}\\ 
0_{n \times n} & \mathbf{H}     &0_{n \times n}\\
0_{n \times n} & 0_{n \times n}& \mathbf{D}\\ 
\end{bmatrix}
\dot{\vec{z}}(t) =  
\begin{bmatrix} 
0_{n \times n} & I_{n \times n}& 0_{n \times n}\\
-\mathbf{C}    & -\mathbf{G}   & 0_{n \times n}\\
0_{n \times n} & \mathbf{D}& 
0_{n \times n}\end{bmatrix} 
\vec{z}(t)
+ \begin{bmatrix}
        0 \\ \mathbf{F} \\ 0
    \end{bmatrix}
\end{aligned}
\end{equation}
for $\vec{z}(t) = (\mathfrak{a}_1(t), \ldots, \mathfrak{a}_n(t), \dot{\mathfrak{a}}_1(t), \ldots, \dot{\mathfrak{a}}_n(t),
\mathfrak{c}_1(t), \ldots, \mathfrak{c}_n(t))^T$, with periodicity conditions $\vec{z}(0)=\vec{z}(T)$.
Here
\begin{align*}
\mathbf{F}_j =& -\left(\tfrac{1}{\alpha} f,\Delta\phi_j\right)_{L^2(\Omega)} \\ 
     \mathbf{H}_{i,j} =& \left(\nabla \phi_i, \nabla \phi_j \right)_{L^2(\Omega)} \\
\mathbf{C}_{i,j} =& \left(\tfrac{1}{\alpha}c^2 \Delta \phi_i, \Delta \phi_j \right)_{L^2(\Omega)}, \\
\mathbf{G}_{i,j} =& 
\frac{\gamma}{\beta}\left( \nabla \phi_i, \nabla \phi_j \right)_{L^2(\Omega)} 
+\left(\tfrac{1}{\alpha}(b \Delta \phi_i+(\tfrac{\gamma}{\beta}\alpha-\mu)\phi_i), \Delta \phi_j \right)_{L^2(\Omega)}
\\&
+ \frac{1}{\beta}\left( \trace{\Omega}{(\nabla \phi_i\cdot\vecn)}, \trace{\Omega}{(\nabla \phi_j\cdot\vecn)} \right)_{L^2(\partial\Omega)}
\\
     \mathbf{D}_{i,j} =&   
\left( \eta_i, \eta_j \right)_{L^2(\partial\Omega)}
\end{align*}
Here, the matrices $\mathbf{H}$, $\mathbf{D}$ are positive definite. 

We can thus restate the problem as follows
\begin{equation}
    \begin{aligned}
    \label{eq:westervelt:linear:galerkin:form:full:ODE:system:b}
       \dot{\vec{z}}(t) &=  
\begin{bmatrix} 
I_{n \times n} & 0_{n \times n}& 0_{n \times n}\\ 
0_{n \times n} & \mathbf{H}     &0_{n \times n}\\
0_{n \times n} & 0_{n \times n}& \mathbf{D}\\ 
\end{bmatrix}^{-1} \left( \begin{bmatrix} 
0_{n \times n} & I_{n \times n}& 0_{n \times n}\\
-\mathbf{C}    & -\mathbf{G}   & 0_{n \times n}\\
0_{n \times n} & \mathbf{D}& 
0_{n \times n}\end{bmatrix} \vec{z}(t) 
+ \begin{bmatrix}
        0  \\ \mathbf{F} \\ 0
    \end{bmatrix} \right) \\
    &= \Tilde{\mathbf{A}}(t) \vec{z}(t) + \Tilde{\mathbf{F}}(t),
\end{aligned}
\end{equation}
where $\Tilde{\mathbf{A}}(0) = \Tilde{\mathbf{A}}(T)$. 
It is readily checked that the conditions for the Floquet-Lyapunov Theorem (see~\cite[p.~90]{linODEperiodic}) are fulfilled, since the system with vanishing $f$ only has the zero solution (see the energy estimates below). Therefore, we obtain the existence of a periodic solution in $C^2(0,T;V_n)$ to~\eqref{eq:westervelt:linear:galerkin:form:full:ODE:system:b}. 
Note that \eqref{eq:westervelt:linear:galerkin:form:full:ODE:system:a}  is equivalent to the spatially discretized version of \eqref{eq:westervelt:linear:weak:form} after reverting the partial integration with respect to time in the first term and setting $\psi(s)=\chi_{[0,t]}(s)\phi_j$. 

\smallskip

Third, we derive energy estimates providing the boundedness of the sequence of Galerkin solutions. 
We do so by 
testing the spatially discretized version of \eqref{eq:westervelt:linear:weak:form} with $\psi=\frac{\gamma}{2\beta}u_n+u_{n\,t}$, $\zeta=h_{n\,t}-u_{n\,t}-[\gamma+\vecn\cdot\nabla]u_n$. Using the periodicty of $\vec{z}$ (that is, of $u_n$ and $u_{n\,t}$) yields the energy identity
\begin{align*}
&\text{lhs}:=\frac{\gamma}{2\beta}\|\nabla u_{n\,t}\|_{L^2(0,T;L^2(\Omega))}^2
+\|\sqrt{\tfrac{b}{\alpha}}\Delta u_{n\,t}\|_{L^2(0,T;L^2\Omega))}^2
+\frac{\gamma}{2\beta} \|\tfrac{c}{\sqrt{\alpha}}\Delta u_n\|_{L^2(0,T;L^2(\Omega))}^2\\
&\qquad+\frac{1}{\beta} \|\nabla u_{n\,t}\cdot\vecn\|_{L^2(0,T;L^2(\partial\Omega))}^2
+ \|h_{n\,t}-u_{n\,t}-[\gamma+\vecn\cdot\nabla]u_n\|_{L^2(0,T;L^2(\partial\Omega))}^2\\
&=\int_0^T\int_\Omega\Bigl(
(\tfrac{1}{2\alpha})_t (c^2+\tfrac{\gamma}{2\beta}b)(\Delta u_n)^2 
-\tfrac{1}{\alpha}(f+(\tfrac{\gamma}{\beta}\alpha-\mu)u_{n\,t}) (\Delta u_{n\,t} + \tfrac{\gamma}{2\beta}\Delta u_n )
\Bigr)\, dx\, dt \\
&=:\text{rhs}.
\end{align*}
Here the right hand side can be further estimated by means of the Holder's and Young's inequality
\begin{align*}
|\text{rhs}|\leq&
\|(\tfrac{1}{2\alpha})_t (c^2+\tfrac{\gamma}{2\beta}b)
\|_{L^\infty(0,T;L^\infty(\Omega))}
\|\Delta u_n\|_{L^2(0,T;L^2(\Omega))}^2\\
&+\frac{\varepsilon_1+\varepsilon_2}{2}\|\Delta u_{n\,t} + \tfrac{\gamma}{2\beta}\Delta u_n\|_{L^2(0,T;L^2(\Omega))}^2
\\
& + \frac{1}{2\varepsilon_1}\|\tfrac{\gamma}{\beta}-\tfrac{\mu}{\alpha}\|_{L^\infty(0,T;L^{2q/(q-1)}(\Omega))}
\|u_{n\,t}\|_{L^2(0,T;L^{2q}(\Omega))}^2
+\frac{1}{2\varepsilon_2}\|\tfrac{f}{\alpha}\|_{L^2(0,T;L^2(\Omega))}^2
\end{align*}
Due to our ansatz space setting and elliptic regularity, skipping the $h_{n\,t}$ term, the left hand side can be bounded from below by 
\begin{align*}
    \text{lhs}&\geq
    \frac{\gamma}{2\beta}\|\nabla u_{n\,t}\|_{L^2(0,T;L^2(\Omega))}^2
+\|\sqrt{\tfrac{b}{\alpha}}\Delta u_{n\,t}\|_{L^2(0,T;L^2(\Omega))}^2
+\frac{\gamma}{2\beta} \|\tfrac{c}{\sqrt{\alpha}}\Delta u_n\|_{L^2(0,T;L^2(\Omega))}^2\\
&+\frac{\gamma}{2\beta} \|u_{n\,t}\|_{L^2(0,T;L^2(\partial\Omega))}^2
\\
    &\geq
    \frac{1}{C}\|u_{n\,t}\|_{L^2(0,T;H^{3/2}(\Omega))}^2
+\|\Delta u_n\|_{L^2(0,T;L^2(\Omega))}^2
\end{align*}
for some $C > 0$.
Together with continuity of the embedding $H^{3/2}(\Omega) \subseteq L^{2q}(\Omega)$, $q \geq 1$, and the imposed bounds on the coefficients, this yields an estimate of the form
\begin{equation}\label{enest0}
\|u_{n\,t}\|_{L^2(0,T;H^{3/2}(\Omega))}^2
+\|\Delta u_n\|_{L^2(0,T;L^2\Omega))}^2
\leq C \|f\|_{L^2(0,T;L^2(\Omega))}^2.
\end{equation}
On the other hand, we clearly also have 
\begin{align*}
    \text{lhs}&\geq \|h_{n\,t}-u_{n\,t}-[\gamma+\vecn\cdot\nabla]u_n\|_{L^2(0,T;L^2(\partial\Omega))}^2,
\end{align*}
from which together with the above estimate on $u_n$ we can can extract an estimate of the form 
\begin{equation}\label{enest0_h}
\|h_{n\,t}\|_{L^2(0,T;L^2(\partial\Omega))}^2
\leq C \|f\|_{L^2(0,T;L^2(\Omega))}^2.
\end{equation}

To also obtain an estimate on $u_{n\,tt}$, we test the spatially discretized version of \eqref{eq:westervelt:linear:weak:form} with $\bigl(\psi_n(t),\zeta_n(t)\bigr)=\bigl((-\Delta_\gamma)^{-1}u_{n\,tt}(t),0\bigr)\in V_n$, 
and 
\revision{integrate by parts to obtain (cf. \eqref{eq:westervelt:linear:weak:form_L2}) 
\[
\begin{aligned}
&\int_0^T\Bigl(\int_\Omega \frac{1}{\alpha}\Bigl(\alpha u_{n\,tt}-c^2\Delta u_n-b\Delta u_{n\,t}+\mu u_{n\,t}-f\Bigr)\, u_{n\,tt}\, dx + \int_{\partial\Omega} \frac{1}{\beta}(\beta u_{n\,tt}+\gamma u_{n\,t}+\nabla u_{n\,t}\cdot\vecn)\, (\nabla\psi_n\cdot\vecn)\, dS\Bigr) \,dt
= 0.
\end{aligned}
\]
We use the identity
\begin{align*}
&\int_0^T\int_{\partial\Omega} \frac{1}{\beta}(\beta u_{n\,tt}+\gamma u_{n\,t}+\nabla u_{n\,t}\cdot\vecn)\, (\nabla\psi_n\cdot\vecn)\, dS \,dt 
=-\int_0^T\int_{\partial\Omega} \frac{\gamma}{\beta}\Bigl(\beta u_{n\,tt}+\gamma u_{n\,t}+\nabla u_{n\,t}\cdot\vecn) \psi_n\, dS \,dt,
\end{align*}
that results from the fact that $\nabla\psi_n(t)\cdot\vecn=-\gamma\psi_n(t)$ holds by its definition as $\psi_n(t)=(-\Delta_\gamma)^{-1}u_{n\,tt}(t)$,
}
\MarginB{x.}
to arrive at
\begin{align*}
\|u_{n\,tt}\|_{L^2(0,T;L^2(\Omega))}^2
=&\left(\frac{1}{\alpha}\Bigl(c^2\Delta u_n+b\Delta u_{n\,t}-\mu u_{n\,t}+f\Bigr),\, u_{n\,tt}\right)_{L^2(0,T;L^2(\Omega))}\\
&\revision{-\frac{\gamma}{\beta}\int_0^T\langle \beta u_{n\,tt}+\gamma u_{n\,t}+\nabla u_{n\,t}\cdot\vecn,\psi_n\rangle_{H^{3/2}(\partial\Omega)^*,H^{3/2}(\partial\Omega)}}\, dt , 
\end{align*}
where due to the trace theorem and elliptic regularity 
\[
\|\psi_n(t)\|_{H^{3/2}(\partial\Omega)}\leq C_{\text{trace}} \|\Delta_\gamma\psi_n(t)\|_{L^2(\Omega)}
=C_{\text{trace}} \|u_{n\,tt}(t)\|_{L^2(\Omega)}.
\]
Applying the Cauchy-Schwarz inequality as well as \eqref{enest0} yields
\begin{equation}\label{enest1}
\begin{aligned}
&\|u_{n\,tt}\|_{L^2(0,T;L^2(\Omega))}^2+ \|u_{n\,t}\|_{L^2(0,T;H^{3/2}(\Omega))}^2
+\|\Delta u_n\|_{L^2(0,T;L^2(\Omega))}^2 \leq C \|f\|_{L^2(0,T;L^2(\Omega))}^2.
\end{aligned}
\end{equation}

\smallskip

Fourth, we take weak limits to construct a solution. 
The space $\Tilde{X} := \{ v \in H^2(0,T;L^2(\Omega)) \cap H^1(0,T;H^{3/2}(\Omega)) \, :\,  \Delta v\in L^2(0,T;L^2(\Omega)) \}$ induced by the energy estimates so far is a Hilbert space, thus reflexive according to the Riesz-Frechét representation theorem (see~\cite[p. 228]{Werner2011}). The estimates derived before state that $(u_n)_{n \in \mathbb{N}}$ is a bounded sequence in $\Tilde{X}$ and by the Eberlein-\v{S}mulian Theorem (see~\cite[p. 107]{Werner2011}), there exists a weakly convergent subsequence $(u_{n_k})_{k \in \mathbb{N}} \subseteq (u_n)_{n \in \mathbb{N}}$ such that $u_{n_k} \rightharpoonup u \in \Tilde{X}$ for $k \rightarrow \infty$. 
Due to linearity of the problem it is readily checked that $u$ indeed satisfies \eqref{eq:westervelt:linear:weak:form} and by combining the above estimates and using weakly lower semicontinuity of the norm, we obtain 
\begin{equation}\label{enestCXtil}
    \|u\|_{\Tilde{X}} \leq C \|f\|_{L^2(0,T;L^2(\Omega))}.
\end{equation}
In fact $u$ solves~\eqref{eq:westervelt:linear:periodic} in an $L^2(0,T;L^2(\Omega))$ sense via \eqref{eq:westervelt:linear:weak:form_L2} .

Fifth, we derive higher spatial regularity. Our goal is to show that a solution to~\eqref{eq:westervelt:linear:periodic} is contained in $X$. Therefore, we want to apply elliptic regularity theory (see~\cite[p. 326]{evans2010}). 
To this end, besides using the $L^2$ estimate on $\Delta u$ resulting from \eqref{enestCXtil}, we have to take into account the regularity of the boundary values. Using the boundary conditions and again estimate \eqref{enestCXtil}, we have that $[\gamma+\vecn\cdot\nabla] u(t) =-\beta u_t$ is the trace of an $H^1(\Omega)$ function and by the trace theorem (see~\cite[p. 272]{evans2010}) and the estimates above we further obtain 
\begin{align}\label{estTraceu}
    \|\trace{\Omega}{[\gamma+\vecn\cdot\nabla]u}\|_{L^2(0,T,H^{1/2}(\partial\Omega))}
    = \|\trace{\Omega}{\beta u_{t}}\|_{L^2(0,T,H^{1/2}(\partial\Omega))} \leq \Tilde{C} \|f\|_{L^2(0,T;L^2(\Omega))}.
\end{align}
Applying elliptic regularity for the impedance Laplacian yields the estimate
\begin{align}
    \|u\|_{L^2(0,T;H^2(\Omega))} \leq \Tilde{C} \|f\|_{L^2(0,T;L^2(\Omega))}.
\end{align}
This concludes the proof.

\section{Proof of Theorem~\ref{thm:westervelt:nonlinear:solution:existence:uniqueness}}
\label{appendix:proof:thm:3:2}

We start by defining the solution operator $\mathcal{F}: X \rightarrow X$, $v \mapsto u$, such that $u$ is a solution to 
\begin{equation}
\label{rmk:eq:westervelt:nonlinear:periodic:fixed:point}
\begin{cases}
u_{tt} - c^2 \Delta u - b \Delta u_t = \eta (v^2)_{tt} + h & \text{in } (0,T) \times \Omega,\\
\beta u_t + \gamma u + \nabla u \cdot \vecn = 0 & \text{on } (0,T) \times \partial\Omega, \\
u(0) = u(T), \, u_t(0) = u_t(T) & \text{in } \Omega. \\
\end{cases}   
\end{equation}
We will employ a fixed point argument. 
To this end we will show that $\mathcal{F}$, restricted to a suitable ball $B_r(0)$, $r > 0$, is a self-mapping and a contraction. Since $X$ is a Banach space, we thus obtain uniqueness and existence of a solution $u$ to \eqref{eq:westervelt:nonlinear:periodic} by the Banach fixed-point theorem.

First, we investigate the well-definedness of the solution operator. Therefore, let $r>0$ and $v \in B_r(0)$ be arbitrary. For the right hand side of~\eqref{rmk:eq:westervelt:nonlinear:periodic:fixed:point}, given the assumptions on $h$ and \revision{$\eta$}, we obtain the following estimate:
\begin{align}
    \nonumber
    & || \eta  (v^2)_{tt} + h ||_{L^2(0,T;L^2(\Omega))}  \leq 2 || \eta ||_{L^\infty(\Omega)} || v_t^2 +  v_{tt} v||_{L^2(0,T;L^2(\Omega))} + || h ||_{L^2(0,T;L^2(\Omega))}.
\end{align}
Hence, in order to apply theorem~\ref{thm:westervelt:linear:solution:existence:uniqueness} we need to establish $|| v_t^2 +  v_{tt} v||_{L^2(0,T;L^2(\Omega))} < \infty$. To do so, we exploit the Sobolev embedding theorem  (see,~\cite{evans2010} and~\cite{FAN2001749}) and (real) interpolation~\cite{Amann95} between Banach spaces which gives 
\begin{align}
    \label{eq:interpol:3:4}
    &H^{3/4}(0,T;H^{13/8}(\Omega)) = H^{3/4}(0,T;[H^2(\Omega), H^{3/2}(\Omega)]_{3/4}) = [L^2(0,T;H^2(\Omega)), H^1(0,T;H^{3/2}(\Omega))]_{3/4},
\end{align} 
and 
\begin{align}
    \label{eq:interpol:1:4}
    &H^{1/4}(0,T;H^{9/8}(\Omega)) = H^{1/4}(0,T;[H^{3/2}(\Omega), L^{2}(\Omega)]_{1/4}) = [L^2(0,T;H^{3/2}(\Omega)), H^1(0,T;L^{2}(\Omega))]_{1/4}.
\end{align}
With this, we proceed by estimating $||v||_{L^\infty(0,T;L^\infty(\Omega))}$ and $||v_t^2||_{L^2(0,T;L^2(\Omega))}$. Exploiting the Sobolev embedding theorem in space~\eqref{sobolev:space:1}, in time~\eqref{sobolev:space:time:1} and (real) interpolation~\eqref{eq:interpol:3:4} gives
\begin{align}
\nonumber
||v||_{L^\infty(0,T;L^\infty(\Omega))} &\leq C_{H^{13/8}(\Omega) \rightarrow L^\infty(\Omega)} ||v||_{L^\infty(0,T;H^{13/8}(\Omega))} \\ \nonumber
& \leq  C_{H^{13/8}(\Omega) \rightarrow L^\infty(\Omega)}  C_{H^{3/4}(0,T) \rightarrow L^\infty(0,T)}||v||_{H^{3/4}(0,T;H^{13/8}(\Omega))} \\ \label{eq:estimate:v:infty}
& \leq  C_{H^{13/8}(\Omega) \rightarrow L^\infty(\Omega)}  C_{H^{3/4}(0,T) \rightarrow L^\infty(0,T)}  ||v||_{H^{1}(0,T;H^{3/2}(\Omega))}^{3/4} ||v||_{L^{2}(0,T;H^{2}(\Omega))}^{1/4}
 \\ \nonumber
& \leq  C_{H^{13/8}(\Omega) \rightarrow L^\infty(\Omega)}  C_{H^{3/4}(0,T) \rightarrow L^\infty(0,T)} ||v||_X \\ \nonumber
&\leq C_{H^{13/8}(\Omega) \rightarrow L^\infty(\Omega)}  C_{H^{3/4}(0,T) \rightarrow L^\infty(0,T)} r.
\end{align}
Hence, $v \in L^\infty(0,T;L^\infty(\Omega))$ as required. For $||v_t^2||_{L^2(0,T;L^2(\Omega))}$ we proceed similarly, using~\eqref{sobolev:space:2}, \eqref{sobolev:space:time:2} and the interpolation identity in~\eqref{eq:interpol:3:4} which results in the following estimate 
\begin{align}
    \nonumber
    || v_t^2 ||_{L^2(0,T;L^2(\Omega))} & = || v_t||_{L^4(0,T;L^4(\Omega))}^2 \\ \nonumber
    & \leq  C_{H^{1/4}(0,T) \rightarrow L^4(0,T)}^2 C_{H^{9/8}(\Omega) \rightarrow L^4(\Omega)}^2 ||v_t||_{H^{1/4}(0,T;H^{9/8}(\Omega))}^2 \\ \label{eq:estimate:v:t:2}
    & \leq C_{H^{1/4}(0,T) \rightarrow L^4(0,T)}^2 C_{H^{9/8}(\Omega) \rightarrow L^4(\Omega)}^2   \left(||v_t||_{H^1(0,T;L^2(\Omega))}^{1/4} ||v_t||_{L^2(0,T;H^{3/2}(\Omega))}^{3/4} \right)^2  \\ \nonumber
    & \leq C_{H^{1/4}(0,T) \rightarrow L^4(0,T)}^2 C_{H^{9/8}(\Omega) \rightarrow L^4(\Omega)}^2 ||v||_X^2 \\ \nonumber
    & \leq  C_{H^{1/4}(0,T) \rightarrow L^4(0,T)}^2 C_{H^{9/8}(\Omega) \rightarrow L^4(\Omega)}^2 r^2.
\end{align}
The estimates in~\eqref{eq:estimate:v:infty} and~\eqref{eq:estimate:v:t:2} imply that $f = \eta(v^2)_{tt} + h \in L^2(0,T;L^2(\Omega))$, in particular the estimate with respect to $r > 0$ reads
\begin{align}
    \nonumber
    & 2 || \eta ||_{L^\infty(\Omega)} || v_t^2 +  v_{tt} v||_{L^2(0,T;L^2(\Omega))} + || h ||_{L^2(0,T;L^2(\Omega))} \leq ||h||_{L^2(0,T;L^2(\Omega))}  + 4 ||\eta||_{L^\infty(\Omega)} \\ \nonumber
    & r^2 \left(C_{H^{1/4}(0,T) \rightarrow L^4(0,T)}^2 C_{H^{9/8}(\Omega) \rightarrow L^4(\Omega)}^2 + C_{H^{13/8}(\Omega) \rightarrow L^\infty(\Omega)}  C_{H^{3/4}(0,T) \rightarrow L^\infty(0,T)} \right).
\end{align}
Theorem~\ref{thm:westervelt:linear:solution:existence:uniqueness}, with $\alpha = 1$ and $\mu = 0$, implies the existence of a constant $C > 0$ independent of $f, \eta$ and $v$ such that
\begin{equation}
    \nonumber
    ||\mathcal{F}(v)||_X \leq C||\eta(v^2)_{tt} + h||_{L^2(0,T;L^2(\Omega))}.
\end{equation}
Hence, for $v \in X$, $\mathcal{F}(v)$ is indeed a (weak) solution to~\eqref{rmk:eq:westervelt:nonlinear:periodic:fixed:point}. The established estimates provide a hint what conditions we have to require for $r>0$ and $\delta>0$ to ensure that $\mathcal{F}: X \rightarrow X$ becomes a self-mapping and a contraction on $B_r(0)$. These conditions are as follows:
\begin{align}
        \label{eq:westervelt:nonlinear:condition:3:modified}
         C(4||\eta||_{L^\infty(\Omega)}(C_{H^{1/4}(0,T) \rightarrow L^4(0,T)}^2 C_{H^{9/8}(\Omega) \rightarrow L^4(\Omega)}^2 + C_{H^{13/8}(\Omega) \rightarrow L^\infty(\Omega)}  C_{H^{3/4}(0,T) \rightarrow L^\infty(0,T)}) r^2 + \delta ) < r,
\end{align}
and
\begin{align}
        \label{eq:westervelt:nonlinear:condition:4}
        ||\eta||_{L^\infty(\Omega)} C_{H^{13/8}(\Omega) \rightarrow L^\infty(\Omega)}  C_{H^{3/4}(0,T) \rightarrow L^\infty(0,T)} r < \tfrac{1}{2}.
\end{align}
Condition~\eqref{eq:westervelt:nonlinear:condition:3:modified} ensures that $\mathcal{F}(B_r(0)) \subseteq B_r(0)$. Solving~\eqref{eq:westervelt:nonlinear:condition:3:modified} with respect to $\delta$ gives
\begin{align}
    \nonumber
    \delta < r \left(   \frac{1}{C} - 4||\eta||_{L^\infty(\Omega)}(C_{H^{1/4}(0,T) \rightarrow L^4(0,T)}^2 C_{H^{9/8}(\Omega) \rightarrow L^4(\Omega)}^2 + C_{H^{13/8}(\Omega) \rightarrow L^\infty(\Omega)}  C_{H^{3/4}(0,T) \rightarrow L^\infty(0,T)}) r \right).
\end{align}
Thus, $r>0$ must be sufficiently small to satisfy
\begin{align}
    \nonumber
    \frac{1}{C} > 4||\eta||_{L^\infty(\Omega)}(C_{H^{1/4}(0,T) \rightarrow L^4(0,T)}^2 C_{H^{9/8}(\Omega) \rightarrow L^4(\Omega)}^2 + C_{H^{13/8}(\Omega) \rightarrow L^\infty(\Omega)}  C_{H^{3/4}(0,T) \rightarrow L^\infty(0,T)}) r.
\end{align}
Condition~\eqref{eq:westervelt:nonlinear:condition:4} ensures that degeneracy is avoided, as follows: 
\begin{align}
\nonumber
& \sup\{c \in \mathbb{R}: 1 - 2 \eta(x) u(t,x) \geq c,\text{ a.e. in }  (0,T) \times \Omega \} = 1 - \inf\{c \in \mathbb{R}: 2 \eta(x) u(t,x) \leq c,\text{ a.e. in }  (0,T) \times \Omega \} \\ \nonumber
&\geq 1 - 2 ||\eta u||_{L^\infty(0,T;L^\infty(\Omega))} \geq 1 - 2 ||\eta||_{L^\infty(\Omega)}  C_{H^{13/8}(\Omega) \rightarrow L^\infty(\Omega)}  C_{H^{3/4}(0,T) \rightarrow L^\infty(0,T)} r > 0.
\end{align}
Next, we verify that $\mathcal{F}$ is a contraction. Let $w = u_1 - u_2  = \mathcal{F}(v_1) - \mathcal{F}(v_2)$ for $v_1, v_2 \in B_r(0)$. Then $w$ solves
\begin{equation}
\label{eq:westervelt:nonlinear:periodic:fixed:point:quadratic}
\begin{cases}
w_{tt} - c^2 \Delta w - b \Delta w_t = \eta \left((v_1^2)_{tt} - (v_2^2)_{tt} \right)  & \text{in } (0,T) \times \Omega,\\
\beta w_t + \gamma w + \nabla w \cdot \vecn = 0 & \text{on } (0,T) \times \partial\Omega, \\
w(0) = w(T), \, w_t(0) = w_t(T) & \text{in } \Omega. \\
\end{cases}   
\end{equation}
By theorem~\ref{thm:westervelt:linear:solution:existence:uniqueness}, we obtain the estimate
\begin{equation}
    \nonumber
    ||w||_X \leq C ||\eta \left((v_1^2)_{tt} - (v_2^2)_{tt} \right)||_{L^2(0,T;L^2(\Omega))}.
\end{equation}
Reorganizing $\left((v_1^2)_{tt} - (v_2^2)_{tt} \right)$ yields
\begin{align}
    (v_1^2)_{tt} - (v_2^2)_{tt} &= 2 \left( (v_{1_t} + v_{2_t})(v_{1_t} - v_{2_t}) + v_1(v_{1_{tt}} -  v_{2_{tt}}) + v_{2_{tt}} (v_1 - v_2) \right).
\end{align}
Applying Minkowski's and H\"older's inequalities, we have
\begin{align}
    \nonumber
    & || (v_{1_t} + v_{2_t})(v_{1_t} - v_{2_t}) + v_1(v_{1_{tt}} -  v_{2_{tt}}) + v_{2_{tt}} (v_1 - v_2) ||_{L^2(0,T;L^2(\Omega))} \\ \nonumber
    & \leq ||v_{1_t} + v_{2_t}||_{L^4(0,T;L^4(\Omega))}  ||(v_{1} - v_{2})_t||_{L^4(0,T;L^4(\Omega))}  \\ \nonumber
    & + ||v_1||_{L^\infty(0,T;L^\infty(\Omega))} ||(v_{1} -  v_{2})_{tt}||_{L^2(0,T;L^2(\Omega))} \\ \nonumber
    &+ ||v_1 - v_2||_{L^\infty(0,T;L^\infty(\Omega))} ||v_{2_{tt}}||_{L^2(0,T;L^2(\Omega))}.
\end{align}
From the established estimates in~\eqref{eq:estimate:v:infty} and~\eqref{eq:estimate:v:t:2} we conclude 
\begin{align}
    ||\mathcal{F}(v_1) - \mathcal{F}(v_2)||_X \leq rC_0 ||v_1 - v_2||_X,
\end{align}
where
\begin{align}
    C_0 := 4 C ||\eta||_{L^\infty(\Omega)} (C_{H^{13/8}(\Omega) \rightarrow L^\infty(\Omega)}  C_{H^{3/4}(0,T) \rightarrow L^\infty(0,T)} + C_{H^{1/4}(0,T) \rightarrow L^4(0,T)}^2 C_{H^{9/8}(\Omega) \rightarrow L^4(\Omega)}^2).
\end{align}
The imposed condition~\eqref{eq:westervelt:nonlinear:condition:3:modified} ensures $rC_0 < 1$, and dividing by $r>0$ gives 
\begin{align}
        \nonumber
         rC_0 & = rC(4||\eta||_{L^\infty(\Omega)}(C_{H^{1/4}(0,T) \rightarrow L^4(0,T)}^2 C_{H^{9/8}(\Omega) \rightarrow L^4(\Omega)}^2 + C_{H^{13/8}(\Omega) \rightarrow L^\infty(\Omega)}  C_{H^{3/4}(0,T) \rightarrow L^\infty(0,T)})) \\
         & \leq  rC(4||\eta||_{L^\infty(\Omega)}(C_{H^{1/4}(0,T) \rightarrow L^4(0,T)}^2 C_{H^{9/8}(\Omega) \rightarrow L^4(\Omega)}^2  + C_{H^{13/8}(\Omega) \rightarrow L^\infty(\Omega)}  C_{H^{3/4}(0,T) \rightarrow L^\infty(0,T)})) + C \frac{\delta}{r} < 1.
\end{align}
Thus, $\mathcal{F}$ is a contraction on $B_r(0)$ and, therefore has a unique fixed point $u \in B_r(0)$ with $\mathcal{F}(u) = u$ solving~\eqref{eq:westervelt:nonlinear:periodic}.

\section{Proof of Proposition~\ref{prop:helmholtz:fully:coupled}}
\label{appendix:proof:prop:4:1}
Let $u^N, v^N \in X_N$ then
\begin{align}
    u^N v^N & = \frac{1}{2} \left(\sum_{m=0}^N \hat{u}_m(x) e^{\iota m \omega t} + \sum_{m=0}^N \overline{\hat{u}_m(x)} e^{- \iota m \omega t} \right) \frac{1}{2} \left(\sum_{m=0}^N \hat{v}_m(x) e^{\iota m \omega t} + \sum_{m=0}^N \overline{\hat{v}_k(x)} e^{- \iota m \omega t} \right) \\ \nonumber 
    & = \frac{1}{2} \text{Re} \left[ \sum_{m=0}^{2N} \sum_{j = \max\{m-N,0\}}^{\min\{m,N\}}  \hat{u}_j(x) \hat{v}_{m-j}(x) e^{\iota m \omega t} + \overline{\hat{u}_j(x)} \hat{v}_{m-j}(x) e^{\iota (m - 2j) \omega t} \right],
\end{align}
\MarginB{xi.}
where we used the formula for the finite Cauchy product. A thorough computation of the second sum yields
\begin{align}
    &\text{Re} \left[\revision{\sum_{m=0}^{2N}} \sum_{j = \max\{m-N,0\}}^{\min\{m,N\}} \overline{\hat{u}_j(x)} \hat{v}_{m-j}(x) e^{\iota (m - 2j) \omega t} \right] \\ \nonumber
    & = \real{ \sum_{j=0}^N \overline{\hat{u}_j(x)} \hat{v}_{j}(x) + \sum_{m=1}^N e^{\iota m \omega t} \sum_{k=m:2}^{2N-m}\left[ \overline{\hat{u}_{\frac{k-m}{2}}(x)} \hat{v}_{\frac{k+m}{2}}(x) + \hat{u}_{\frac{k+m}{2}}(x) \overline{\hat{v}_{\frac{k-m}{2}}(x)}\right]}.
\end{align}
After projection, the first sum becomes (since we skip all harmonic frequencies above the $N$-th harmonic)
\begin{align}
    \nonumber
    \text{Proj}_{X_N} \left(\real{\sum_{m=0}^{2N} \sum_{j = \max\{m-N,0\}}^{\min\{m,N\}}  \hat{u}_j(x) \hat{v}_{m-j}(x) e^{\iota m \omega t} } \right) = \real{ \sum_{m=0}^{N} e^{\iota m \omega t} \sum_{j = 0}^{m}  \hat{u}_j(x) \hat{v}_{m-j}(x) }.
\end{align}
This finally yields
\begin{align}
\nonumber
    &\text{Proj}_{X_N}(u^N v^N) = \frac{1}{2}\real{ \sum_{m=0}^{N} e^{\iota m \omega t} \sum_{j = 0}^{m}  \hat{u}_j(x) \hat{v}_{m-j}(x)} \\ \nonumber
    &+  \frac{1}{2} \real{ \sum_{j=0}^N \overline{\hat{u}_j(x)} \hat{v}_{j}(x) + \sum_{m=1}^N e^{\iota m \omega t} \sum_{k=m:2}^{2N-m}\left[ \overline{\hat{u}_{\frac{k-m}{2}}(x)} \hat{v}_{\frac{k+m}{2}}(x) + \hat{u}_{\frac{k+m}{2}}(x) \overline{\hat{v}_{\frac{k-m}{2}}(x)}\right] }.
\end{align}
For $\text{Proj}_{X_N} \left( ((p^N)^2)_{tt} \right)$ we obtain the following expression
\begin{align}
\label{eq:westervelt:projection:qudratic:tt}
    \text{Proj}_{X_N} \left( ((p^N)^2)_{tt}  \right)  = -\frac{1}{2} &\omega^2 \text{Re}\left[ \sum_{m=1}^N m^2 e^{\iota m \omega t} \left(\sum_{k=1}^m \hat{p}_{k}(x) \hat{p}_{m-k}(x) + 2\sum_{k=m:2}^{2N-m} \overline{\hat{p}_{\frac{k-m}{2}}(x)} \hat{p}_{\frac{k+m}{2}}(x) \right)  \right].
\end{align}
%
For the left hand side of~\eqref{eq:westervelt:quadratic-nonlinearity:periodic} we obtain 
\begin{align}
    \nonumber
    \text{Proj}_{X_N}\left(p^N_{tt} - c^2 \Delta p^N - b \Delta p^N_t \right) &= \omega^2 \frac{1}{2} \sum_{k=0}^N - \hat{p}_k(x) k^2 e^{\iota k \omega t} - \overline{\hat{p}_k(x)} k^2 e^{-\iota k \omega t} \\ \nonumber
    & - c^2 \frac{1}{2} \sum_{k=0}^N \Delta \hat{p}_k(x) e^{\iota k \omega t} + \Delta \overline{\hat{p}_k(x)} e^{-\iota k \omega t} \\ \nonumber 
    & - \iota \omega b \frac{1}{2} \sum_{k=0}^N  \Delta\hat{p}_k(x) k e^{\iota k \omega t} - \Delta\overline{\hat{p}_k(x)} k e^{-\iota k \omega t}.
\end{align}
For the source term we further have $h(t,x) = g(t,x)_{tt}$ which gives
\begin{equation}
    \nonumber
    \text{Proj}_{X_N}(h(x,t)) = -\frac{\omega^2}{2} \sum_{k=1}^N k^2 \hat{h}_k e^{i k \omega t} + k^2 \overline{\hat{h}_k} e^{-i k \omega t}.
\end{equation}
Considering the linear independence of $e^{\iota m \omega t}$ in $L^2(0,T)$ we obtain for each $0\leq m \leq N$ that $\hat{p}_m(x)$ has to solve
\begin{align}
    \nonumber
    -\omega^2 m^2 & \hat{p}_m(x)  - ( c^2 + \iota m \omega b) \Delta \hat{p}_m(x) = \\ \nonumber
    & - \frac{\eta(x)\omega^2 m^2}{2} \left(\sum_{k=1}^m \hat{p}_{k}(x) \hat{p}_{m-k}(x) + 2\sum_{k=m:2}^{2N-m} \overline{\hat{p}_{\frac{k-m}{2}}(x)} \hat{p}_{\frac{k+m}{2}}(x)\right) - \omega^2 m^2 \hat{h}_m(x).
\end{align}
Dividing by $(c^2 + \iota m \omega b)$ gives
\begin{align}
    \nonumber
    -\kappa^2 m^2 & \hat{p}_m(x) - \Delta \hat{p}_m(x) = - \frac{\eta(x) \kappa^2 m^2}{2} \left(\sum_{k=1}^m \hat{p}_{k}(x) \hat{p}_{m-k}(x) + 2\sum_{k=m:2}^{2N-m} \overline{\hat{p}_{\frac{k-m}{2}}(x)} \hat{p}_{\frac{k+m}{2}}(x)\right) - \kappa^2 m^2 \hat{h}_m(x), 
\end{align}
$ m = 0$ results in
\begin{align}
    - \Delta  \hat{p}_0(x) = 0,
\end{align}
and the boundary conditions for each of the Helmholtz equations read
\begin{align}
    (\iota \beta m \omega + \gamma) \hat{p}_m(x) + \nabla \hat{p}_m(x) \cdot \vecn = 0,
\end{align}
where we define $\kappa^2 = \frac{\omega^2}{c^2 + \iota m \omega b}$. This concludes the proof.

%

\end{document}